\newtheorem{theorem}{Theorem}[section]
\newtheorem{proposition}[theorem]{Proposition}
\newtheorem{remark}[theorem]{Remark}
\newtheorem{lemma}[theorem]{Lemma}
\newtheorem{corollary}[theorem]{Corollary}
\newtheorem{definition}[theorem]{Definition}
\def\ww{\widetilde}
\def\a{\alpha}
\def\b{\beta}
\def\l{\lambda}
\font\tenbb=msbm10
\font\sevenbb=msbm8
\font\fivebb=msbm5
\def\bb{\fam\bbfam}
\def\FF{{\bb F}}
\def\Z{{\bb Z}}
\begin{document}
 
\begin{center}
{\bf \Large  On the mod - $p$ cohomology of $Out(F_{2(p-1)})$}
\end{center}
 
  
\bigskip
\bigskip\noindent
\begin{center}
\begin{tabular}{cc}
{\sc Henry Glover}& {\sc Hans-Werner Henn}\\
{\small Ohio State University} & {\small Universit\'e Louis Pasteur} \\
{\tt glover@math.ohio-state.edu} & {\tt henn@math.u-strasbourg.fr }\\
\end{tabular}

\bigskip
\bigskip
October 28, 2008  
\end{center}
 
\bigskip
 
\abstract
We study the mod-$p$ cohomology of the group $Out(F_n)$ 
of outer automorphisms of the free group $F_n$ 
in the case $n=2(p-1)$ which is the smallest $n$ for which the 
$p$-rank of this group is $2$. For $p=3$ we give a complete computation, 
at least above the virtual cohomological dimension of $Out(F_4)$ (which is 5). 
More precisley, we calculate the equivariant cohomology of the 
$p$-singular part of outer space for $p=3$.   
For a general prime $p> 3$ we give a recursive description in terms of  
the mod-$p$ cohomology of $Aut(F_k)$ for $k\leq p-1$. 
In this case we use the $Out(F_{2(p-1)})$-equivariant cohomology 
of the poset of elementary abelian $p$-subgroups of $Out(F_n)$.  
\endabstract
 
\bigskip
 
\section{Introduction. Background and results.}
\label{sec:intro}
\indent
 
\bigskip
 
Let $F_n$ denote the free group on $n$ generators and let $Out(F_n)$ 
denote its group of outer automorphisms. We are interested in the 
cohomology ring $H^*(Out(F_n);\FF_p)$ with coefficients in the prime 
field $\FF_p$. The case $n=2$ is well understood because of the
isomorphism $Out(F_2)\cong GL_2(\Z)$ and the stable  cohomology of
$Out(F_n)$ has been shown to agree with that of symmetric groups \cite{G}.
Apart from these results the only other complete calculation is that of the 
integral cohomology ring of  $Out(F_3)$ by T. Brady \cite{Bra}.
There has been a fair amount of work on the Farell cohomology of 
$Out(F_n)$, but always in cases where the $p$ - rank
of $Out(F_n)$ is one \cite{GMV,GM,Ch}. (We recall that the $p$ - rank 
of a group $G$ is the maximal $k$ for which $(\Z/p)^k$ embeds into 
$G$.) The $p$ - rank of $Out(F_n)$ is known to be $[\frac{n}{p-1}]$. 
In this paper we consider the case $n=2(p-1)$ for $p$ odd, i.e. the 
first case of $p$ - rank two and compute the mod - $p$ cohomology 
ring, at least above dimension $2n-3$, the virtual cohomological dimension 
of $Out(F_n)$. For $p=3$, $n=4$ our result is completely explicit  and can be 
neatly described in terms of the cohomology of certain finite subgroups 
of $Out(F_4)$. 
In the general case it has a recursive nature, i.e. we express the 
result in terms of automorphism groups of free groups of lower rank.

We will now describe our results. We start by exhibiting certain 
finite subgroups of $Out(F_n)$. For this we identify $F_n$ with the 
fundamental group $\pi_1(R_n)$ of the wedge of $n$ circles which we 
consider as a graph with one vertex and $n$ edges. As such it is also 
called the rose with $n$ leaves and is denoted $R_n$. If $\Gamma$ is 
a finite graph and $\alpha:R_n\to \Gamma$ is an (unpointed) homotopy 
equivalence then $\alpha$ induces an isomorphism 
$\alpha_{*}:Out(F_n)=Out(\pi_1(R_n))\cong Out(\pi_1(\Gamma))$.
The group $Aut(\Gamma)$ of graph automorphisms of $\Gamma$ is a finite group
which for $n>1$ embeds naturally into $Out(\pi_1(\Gamma))$  \cite{SV2}. 
Therefore $\alpha_{*}^{-1}(Aut(\Gamma))$ is a finite subgroup of $Out(F_n)$. 
Note that the choice of (the homotopy class) of $\alpha$ (which is also 
called a marking of $\Gamma$) is important for getting an actual subgroup, 
and that running through the different choices for $\alpha$ amounts to 
running through the different representatives in the same conjugacy 
class of subgroups.

\bigbreak 
Now let $p=3$, $n=4$ and let $R_4$ be the rose with four leaves 
(cf. Figure~\ref{Fig1-1}). Its 
automorphism group can be identified with the wreath product 
$\Z/2\wr \Sigma_4$. Choosing $\alpha=id:R_4\to R_4$ gives us a subgroup
of $Out(F_n)$ which we denote by $G_R$.
 
Next let $\Theta_2$ be the connected graph with two vertices and $3$ 
edges between these two vertices, and let $\Theta_2^{1,1}$ be the 
wedge of $\Theta_2$ with a rose with one leaf attached to each of the 
two vertices of $\Theta_2$ (cf. Figure~\ref{Fig1-1}). 
Its automorphism group is $\Sigma_3\times D_8$ where $D_8$ is the dihedral 
group of order $8$. Choosing any homotopy equivalence between $R$ and
$\Theta_2^{1,1}$ gives us a subgroup of $Out(F_4)$ which we denote by $G_{1,1}$.
 
The automorphism group of the wedge $\Theta_{2}\vee \Theta_{2}$ 
(cf. Figure~\ref{Fig1-1}) is $\Sigma_3\wr \Z/2$. 
After choosing a homotopy equivalence $R_4\to \Theta_2\vee \Theta_2$ 
we get a subgroup of $Out(F_4)$ which we denote by $G_2$.
  
Let $K_{3,3}$ be the Kuratowski graph with two blocks of $3$ vertices 
and $9$ edges which join all vertices from the first block to all 
vertices of the second block (cf. Figure~\ref{Fig1-1}). 
Its automorphism group is again $\Sigma_3\wr \Z/2$ and after choosing
a homotopy equivalence $R_4\to K_{3,3}$ we obtain a subgroup 
$G_{K}$ of $Out(F_4)$.
 
\bigskip

\begin{figure}[h!]
\begin{footnotesize}
\begin{center}
\includegraphics[width=0.60\hsize]{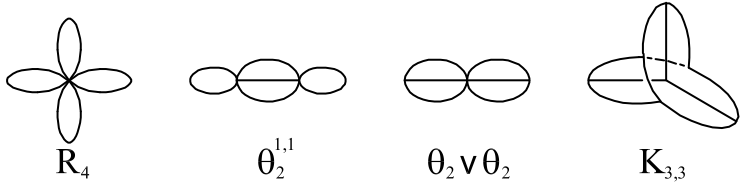}
\caption{\label{Fig1-1}\footnotesize 
The graphs whose automorphism groups
determine $H^*(Out(F_4);\FF_3)$ for $*>5$.}
\end{center}
\end{footnotesize}
\end{figure}

\smallskip

We will see below (cf. section \ref{cell structure}) 
that the subgroup $\Sigma_3\times \Z/2$ of $G_K$ 
which is given by permuting the three vertices in the first block and 
independently two of the three vertices in the second block is 
conjugate in $Out(F_4)$ to the ``diagonal'' subgroup 
$\Delta\Sigma_3\times \Z/2$ of $G_2\cong \Sigma_3\wr \Z/2$. By 
choosing appropriate markings of $\Theta_2\vee \Theta_2$ and 
$K_{3,3}$ we can assume that this subgroup is the same. We denote it by $H$.
Let $G_2*_HG_K$ be the amalgamated product.

\begin{theorem}\label{the:res}
The inclusions of the finite subgroups $G_R$, $G_{1,1}$, 
$G_2$ and $G_K$ into $Out(F_4)$ induces a homomorphism of groups 
$$
G_R*G_{1,1}*(G_2*_HG_K)\to Out(F_4)
$$ 
and the induced map in mod $3$-cohomology  
$$
H^*(Out(F_4);\FF_3)\to H^*(G_R;\FF_3)\times H^*(G_{1,1};\FF_3)
\times H^*(G_2*_HG_K;\FF_3) 
$$
is an isomorphism above dimension $5$.
\end{theorem}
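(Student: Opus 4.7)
The plan is to compute the equivariant cohomology of the action of $Out(F_4)$ on the $3$-singular part of outer space. Recall that outer space $X_n$ is a contractible simplicial complex on which $Out(F_n)$ acts with finite stabilizers, whose spine has dimension $2n-3$. Let $X_{(p)} \subset X_n$ be the subcomplex of points fixed by some element of order $p$. Above the virtual cohomological dimension the free part of the action cannot contribute, so a standard argument (Brown's $p$-singular formula) identifies
$$H^*(Out(F_n); \FF_p) \cong H^*_{Out(F_n)}(X_{(p)}; \FF_p) \qquad \text{for } * > 2n-3.$$
For $n=4$ and $p=3$ this reduces the problem to computing $H^*_{Out(F_4)}(X_{(3)}; \FF_3)$.

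The first concrete step is to determine $X_{(3)}$ cell-by-cell. Points of $X_4$ correspond to marked metric graphs of rank $4$ with all vertices of valence at least $3$; such a point is $3$-singular iff the underlying graph carries an automorphism of order $3$. A finite enumeration is needed: one expects that the top-dimensional orbits of $X_{(3)}/Out(F_4)$ are indexed by $R_4$, $\Theta_2^{1,1}$, $\Theta_2 \vee \Theta_2$ and $K_{3,3}$, with stabilizers $G_R$, $G_{1,1}$, $G_2$ and $G_K$ respectively, and that any lower-dimensional orbit arises from a collapse of one of these four graphs that retains an order-$3$ symmetry.

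Next I would determine the incidences inside $X_{(3)}/Out(F_4)$: which orbits of collapsed graphs lie in the closures of which top orbits, and with what stabilizer. The combinatorial content of the theorem is (i) the orbits of $R_4$ and of $\Theta_2^{1,1}$ are isolated components of $X_{(3)}/Out(F_4)$, i.e.\ no $3$-invariant forest collapse yields another $3$-singular graph in a different orbit, and (ii) the orbits of $\Theta_2 \vee \Theta_2$ and of $K_{3,3}$ are joined along a single one-dimensional orbit whose stabilizer, after the marking choices indicated in the paragraph before the theorem, is the subgroup $H = \Delta\Sigma_3 \times \Z/2$.

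With these incidences in hand the rest is formal. We get
$$X_{(3)}/Out(F_4) \simeq \{*_{G_R}\} \sqcup \{*_{G_{1,1}}\} \sqcup \bigl(\{*_{G_2}\} \cup_{\{*_H\}} \{*_{G_K}\}\bigr)$$
as a graph of finite groups, and the isotropy spectral sequence produces the factors $H^*(G_R; \FF_3)$ and $H^*(G_{1,1}; \FF_3)$ from the two isolated components, while Bass--Serre theory applied to the third component gives $H^*(G_2 *_H G_K; \FF_3)$ via the Mayer--Vietoris sequence. Naturality of the construction identifies the resulting map with the one induced by the four inclusions into $Out(F_4)$. The main obstacle is the second step: the explicit combinatorial classification of rank-$4$ graphs with an order-$3$ automorphism, and the verification that the only nontrivial overlap in $X_{(3)}/Out(F_4)$ is the one between $\Theta_2 \vee \Theta_2$ and $K_{3,3}$ along $H$.
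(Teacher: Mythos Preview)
Your overall strategy is correct and matches the paper's: compute $H^*_{Out(F_4)}((K_4)_s;\FF_3)$ via the isotropy spectral sequence, use that this agrees with $H^*(Out(F_4);\FF_3)$ above the v.c.d.\ $5$, and identify the answer with the cohomology of the indicated free/amalgamated product. However, your combinatorial picture of $(K_4)_s/Out(F_4)$ is substantially too optimistic, and the proof does not go through as you describe it.

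The claim that the orbit of $R_4$ is an isolated component is false: many $3$-singular graphs ($\Theta_4$, $\Theta_3^{0,1}$, $\Theta_3\ast R_1$, $\Theta_2\diamond Y$, $\Theta_2\ast\ast\Theta_1$, $W_3\vee R_1$) collapse to $R_4$ via $3$-invariant forests, and the resulting ``rose component'' of the quotient is a $3$-dimensional complex, not a point. Likewise the component containing $K_{3,3}$ and $\Theta_2\vee\Theta_2$ is not just a single edge: it also contains $T_0$, $T_1$, $S_0$, $P_1$, $\Theta_2:\Theta_1$, $\Theta_2\vee\Theta_1\vee R_1$, $\Theta_2^{0,2}$ and is $2$-dimensional, with two distinct $2$-cells sharing the same vertex set. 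Only your claim about $\Theta_2^{1,1}$ being isolated is correct.

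The paper therefore needs two further arguments that your outline omits. For the rose component one checks that it is contractible and that \emph{every} cell has stabilizer with mod-$3$ cohomology isomorphic to $H^*(\Sigma_3;\FF_3)$, with all face maps inducing the identity under this identification; this collapses the spectral sequence to $H^*(G_R;\FF_3)$. For the $K_{3,3}$ component one shows that away from the ``critical edge'' joining $K_{3,3}$ to $\Theta_2\vee\Theta_2$ the same constancy of stabilizer cohomology holds, and that the critical edge is a deformation retract of the quotient; only then does the equivariant cohomology reduce to that of the preimage of this edge, whence the amalgam $G_2*_HG_K$ via your Bass--Serre argument. Without these reductions the isotropy spectral sequence has many more terms and the conclusion does not follow.
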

 
The mod-$3$ cohomology of the first two factors is the same as that 
of $\Sigma_3$, i.e. it is isomorphic to the tensor product
$\FF_3[a_4]\otimes \Lambda(b_3)$ of a polynomial algebra generated by 
a class $a_4$ of dimension $4$ and an exterior algebra generated by a 
class $b_3$ of dimension $3$. The cohomology of the amalgamated product  
can also be easily computed and we obtain the following explicit result.

\begin{corollary}
In degrees bigger than $5$ we have
$$
H^*(Out(F_4);\FF_3)\cong 
\prod_{i=1}^2\FF_3[a_4^i]\otimes\Lambda(b_3^i)\times 
\FF_3[r_4,r_8]\otimes\Lambda(s_3)\{1,t_7,\widetilde{t_7},t_8\}\ . 
$$
Here the lower indices indicate the dimensions of the cohomology classes 
and the last factor is described as a free module of rank $4$ over
$\FF_3[r_4,r_8]\otimes\Lambda(s_3)$ on the indicated classes. 
(The full multiplicative structure is given in Proposition 3.3.) 
\end{corollary}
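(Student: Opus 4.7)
The plan is to derive the Corollary from Theorem~\ref{the:res} by separately computing the mod-$3$ cohomology of the three factors $G_R$, $G_{1,1}$, and $G_2*_HG_K$. The first two factors are handled by a standard Sylow argument: both $G_R\cong\Z/2\wr\Sigma_4$ and $G_{1,1}\cong\Sigma_3\times D_8$ have Sylow $3$-subgroup $\Z/3$ with Weyl group $\Z/2$ acting by inversion (the complementary $2$-groups contribute trivially mod~$3$), so $H^*(G_R;\FF_3)\cong H^*(G_{1,1};\FF_3)\cong H^*(\Sigma_3;\FF_3)=\FF_3[a_4]\otimes\Lambda(b_3)$, which yields the first two tensor factors.

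For the third factor I would use the Mayer--Vietoris sequence
\[
\cdots \to H^{n-1}(H;\FF_3)\to H^n(G_2*_HG_K;\FF_3)\to H^n(G_2;\FF_3)\oplus H^n(G_K;\FF_3)\xrightarrow{\phi} H^n(H;\FF_3)\to\cdots
\]
where $\phi$ is the difference of the two restriction maps. Since $G_2\cong G_K\cong\Sigma_3\wr\Z/2$ and the outer $\Z/2$ has order prime to $3$, the Lyndon--Hochschild--Serre spectral sequence for $(\Sigma_3)^2\rtimes\Z/2$ collapses at $E_2$, giving $H^*(G_2;\FF_3)\cong(H^*(\Sigma_3)^{\otimes 2})^{\Sigma_2}=\FF_3[r,R]\otimes\Lambda(s,t)$ with $\Sigma_2$-invariant representatives $r=a_1+a_2$, $R=a_1a_2$, $s=b_1+b_2$, $t=(a_1-a_2)(b_1-b_2)$ of degrees $4,8,3,7$; the same description applies to $G_K$, while $H^*(H;\FF_3)=\FF_3[a]\otimes\Lambda(b)$. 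The two embeddings of $H$ differ: in $G_2$ the subgroup $H$ is the diagonal $\Delta\Sigma_3\times\Z/2$ (with $\Z/2$ the swap), yielding restrictions $r\mapsto-a$, $R\mapsto a^2$, $s\mapsto-b$, $t\mapsto 0$; in $G_K$ the subgroup $H$ is $\Sigma_3\times\Z/2$ where the $\Z/2$ sits as a transposition inside the second $\Sigma_3$ (hence invisible mod~$3$), yielding restrictions $r'\mapsto a$, $R'\mapsto 0$, $s'\mapsto b$, $t'\mapsto ab$. Both of these restrictions are already surjective onto $H^*(H;\FF_3)$, hence $\phi$ is surjective in every degree, and the MV sequence degenerates into short exact sequences $0\to H^n(G_2*_HG_K;\FF_3)\to H^n(G_2;\FF_3)\oplus H^n(G_K;\FF_3)\xrightarrow{\phi} H^n(H;\FF_3)\to 0$ for $n\ge 1$.

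It remains to identify $\ker(\phi)$ with the claimed free module over $R_0=\FF_3[r_4,r_8]\otimes\Lambda(s_3)$. The subring $\FF_3[r_4]\otimes\Lambda(s_3)$ is generated by the pairs $r_4=(-r,r')$ and $s_3=(-s,s')$, which provide a ring-theoretic section of the surjective restriction $\ker(\phi)\to H^*(H;\FF_3)$; the second polynomial generator is taken as $r_8=(R,\,r'^2+R')$ (a class restricting to $a^2$ on both sides, and linearly independent of $r_4^2=(r^2,r'^2)$ inside $\ker\phi$), and the four module generators as $1$, $t_7=(t,0)$, $\widetilde{t_7}=(0,\,s'r'-t')$, $t_8=(0,R')$, each of the last three lying in the kernel of the further restriction to $H^*(H;\FF_3)$. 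A direct Poincar\'e-series calculation gives
\[
P(\ker\phi)=2P(H^*(G_2;\FF_3))-P(H^*(H;\FF_3))=\frac{(1+t^3)(1+2t^7+t^8)}{(1-t^4)(1-t^8)},
\]
which exactly matches the series of $R_0\otimes\FF_3\{1,t_7,\widetilde{t_7},t_8\}$. Combined with a degree-by-degree check that the natural $R_0$-linear map from this tensor product to $\ker(\phi)$ is injective (verified by explicit linear algebra on the representatives above), this yields the claimed free-module decomposition. The main obstacle is the careful choice of $r_8$ and $t_8$ so as to avoid accidental relations; for example, the seemingly natural choice $r_8=(R,r'^2)$ together with $t_8=(0,R')$ forces $r_4^2t_8=r_8t_8$ and breaks freeness, so the $R'$-component of $r_8$ is essential. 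The detailed multiplicative relations on the module generators (such as products $t_7\widetilde{t_7}$, $t_7t_8$, etc., and the squares) are then the content of Proposition 3.3.
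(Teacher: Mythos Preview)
Your proof is correct and follows essentially the same route as the paper: reduce via Theorem~\ref{the:res} to the three factors, identify the first two with $H^*(\Sigma_3;\FF_3)$, and compute the third as the equalizer (equivalently, the Mayer--Vietoris kernel, since one restriction is surjective) of the two restrictions from $H^*(\Sigma_3\wr\Z/2;\FF_3)$ to $H^*(\Sigma_3;\FF_3)$, finishing with the same Poincar\'e-series count. The only differences are cosmetic choices of generators --- you take $R=a_1a_2$ where the paper uses $(a_1-a_2)^2$, and your $\widetilde{t_7}$, $t_8$ sit on the $G_K$-side whereas the paper's sit on the $G_2$-side --- but these are related by adding elements of $R_0$ and do not affect the argument.
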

 
We derive these results by analyzing the Borel construction 
$EOut(F_n)\times_{Out(F_n)}K_n$ with respect to the action of 
$Out(F_n)$ on the ``spine $K_n$ of outer space'' \cite{CV}. 
We recall that $K_n$ is a 
contractible simplicial complex of dimension $2n-3$ on which 
$Out(F_n)$ acts simplicially with finite isotropy groups, in 
particular the Borel construction is a classifying space for 
$Out(F_n)$. However, $K_4$ is already very difficult to analyze. We 
replace it therefore by its more accessible $3$-singular locus 
$(K_4)_s$ so that our results really describe the mod-$3$ 
cohomology of $EOut(F_4)\times_{Out(F_4)}(K_4)_s$ which agrees 
with that of $Out(F_4)$ above degree $5$.
 
\bigbreak 
However, for $p>5$ and $n=2(p-1)$, even the $p$ - singular locus of 
$K_n$ becomes too difficult to analyze directly. An alternative 
approach towards the mod-$p$ cohomology of $Out(F_n)$
uses the normalizer spectral sequence \cite{Bro} 
which is associated to the action of
$Out(F_n)$ on the poset $\cal{A}$ of its elementary abelian $p$ - subgroups 
and which also calculates the mod-$p$ cohomology of $Out(F_n)$ 
above its finite virtual cohomological dimension $2n-3$. 
This method has been used by Jensen \cite{J} to study the $p$-primary 
cohomology of $Aut(F_{2(p-1)})$ and it works equally for 
$Out(F_{2(p-1)})$, in fact it is even somewhat simpler.
 
In order to describe our result for $Out(F_{2(p-1)})$ we need to 
describe certain elementary abelian $p$ - subgroups of 
$Out(F_{2(p-1)})$.
For this we consider the following graphs. Let $R_{2(p-1)}$ denote the 
rose with $2(p-1)$ leaves, $\Theta_{p-1}$ denote the graph with two 
vertices with $p$ edges between them, $\Theta_{p-1}^{s,t}$ with 
$s+t=p-1$ denote the graph which is obtained from $\Theta_{p-1}$ by 
attaching a rose with $s$ leaves at one vertex and one with $t$ 
leaves at the other vertex of $\Theta_{p-1}$. Furthermore let 
$\Theta_{p-1}\vee\Theta_{p-1}$ denote the wedge of two $\Theta_{p-1}$ at 
a common vertex, see Figure~\ref{Fig1-2} below.
\vfil\eject

\begin{figure}[ht!]
\begin{footnotesize}
\begin{center}
\includegraphics[width=0.60\hsize]{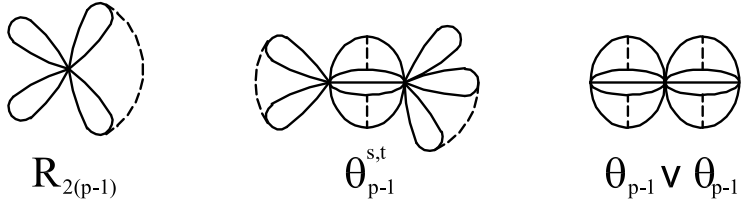}
\caption{\label{Fig1-2}\footnotesize  The graphs which 
determine  $H^*(Out(F_{2p-2});\FF_p)$ for $p>3$ and $*>2p-3$.}
\end{center}
\end{footnotesize}
\end{figure}

After choosing appropriate markings these graphs determine as before 
subgroups
\begin{eqnarray}
G_{R}&\cong & \ \Z/2\wr \Sigma_{2(p-1)} \nonumber \\
G_{s,p-1-s}&\cong &
\begin{cases}  (\Z/2\wr \Sigma_s)\times \Sigma_p\times 
(\Z/2\wr \Sigma_{p-1-s})\ \ &\ s\neq \frac{p-1}{2} \\
\big((\Z/2\wr \Sigma_s)\times \Sigma_p\times 
(\Z/2\wr \Sigma_s)\big)\rtimes\Z/2 \ \ &\ s=\frac{p-1}{2} \\
\end{cases}
\nonumber\\
G_2 &\cong & \ \Sigma_p\wr \Z/2 \ .\nonumber
\end{eqnarray}
(In the third line the action of $\Z/2$ is trivial on the middle 
factor $\Sigma_p$ while it interchanges the other two factors.)
The $p$ - Sylow subgroups in all these groups are elementary abelian. 
We choose representatives and denote them by $E_{R}\cong \Z/p$ resp.
$E_{s,p-1-s}\cong \Z/p$ 
resp. $E_2\cong \Z/p\times \Z/p$. With a 
suitable choice of representatives we can assume that
$E_{0,p-1}$ is one of the two factors of $E_2\cong\Z/p\times\Z/p$.
The structure of the normalizers of these elementary abelian subgroups 
and their relevant intersections is summarized in the 
following result. 

In this result we will abbreviate the normalizer 
$N_{Out(F_{2(p-1)})}(E_R)$ of $E_R$ in $Out(F_{2(p-1)})$ by $N_R$, 
and likewise $N_{Out(F_{2(p-1)})}(E_{s,p-s})$ by 
$N_{s,p-s}$ and $N_{Out(F_{2(p-1)})}(E_2)$ by $N_2$; the normalizer 
of the diagonal subgroup $\Delta(E_2)$ of $E_2$ is abbreviated by $N_{\Delta}$. 
Furthermore $N_{\Sigma_p}(\Z/p)$ 
denotes the normalizer of $\Z/p$ in $\Sigma_p$ and 
$Aut(F_n)$ denotes the group of automorphisms of $F_n$. 
 
\begin{proposition}\label{pro:3}  
 
a) The groups $E_R$, $E_{s,p-1-s}$ for $0\leq s\leq \frac{p-1}{2}$, $E_2$ 
and the diagonal $\Delta(E_2)$ of $E_2$ are pairwise non-conjugate, and 
any elementary abelian $p$-subgroup of $Out(F_{2(p-1)})$ 
is conjugate to one of them. 
\smallbreak 

b) $E_{0,p-1}$ is subconjugate to $E_2$ and neither $E_R$ nor any  
of the $E_{s,p-1-s}$ with $1\leq s\leq \frac{p-1}{2}$ is subconjugate 
to $E_2$. 
\smallbreak 
c) There are canonical isomorphisms   
\begin{eqnarray}
N_R&\cong & \
N_{\Sigma_p}(\Z/p)\times ((F_{p-2}\rtimes Aut(F_{p-2}))\rtimes \Z/2) 
\nonumber \\
N_ {s,p-1-s}&\cong & \ N_{\Sigma_p}(\Z/p)\times
Aut(F_s)\times Aut(F_{p-1-s}) \ \ \ \ \ \text{if}\ \ 0\leq s< \frac{p-1}{2} 
\nonumber\\
N_{s,s}&\cong & \ N_{\Sigma_p}(\Z/p)\times
(Aut(F_s)\wr \Z/2) \ \ \ \ \ \ \ \ \ \ \ \ \ \ \ 
\text{if}\ \ s=\frac{p-1}{2}
\nonumber \\
N_{\Delta}&\cong &\big((\Z/p\times\Z/p)\rtimes(\Z/(p-1)\times\Z/2)\big)
*_{N_{\Sigma_p}(\Z/p))\times \Z/2}{(N_{\Sigma_p}(\Z/p)\times \Sigma_3)}\nonumber \\
N_2&\cong  & \ N_{\Sigma_p}(\Z/p)\wr \Z/2 \ .\nonumber 
\end{eqnarray}
\smallbreak  
d) There are canonical isomorphisms 
\begin{eqnarray}
N_{0,p-1}\cap N_2 &\cong &  
N_{\Sigma_p}(\Z/p)\times N_{\Sigma_p}(\Z/p) \ .\nonumber \\
N_{\Delta}\cap N_2 &\cong &  
(\Z/p\times \Z/p)\rtimes (Aut(\Z/p)\times \Z/2) \ \nonumber 
\end{eqnarray}
where in the semidirect product $Aut(\Z/p)$ acts diagonally on 
$\Z/p\times \Z/p$ and $\Z/2$ acts by interchanging the two factors. 
\end{proposition}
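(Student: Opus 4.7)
The plan is to exploit the action of $Out(F_n)$, with $n=2(p-1)$, on the spine $K_n$ of outer space: this is a contractible simplicial complex on which vertex stabilizers are conjugate to the graph automorphism groups $Aut(\Gamma)$ of the rank-$n$ graphs $\Gamma$ (via the chosen markings). Any finite subgroup $E\subset Out(F_n)$ fixes a simplex of $K_n$, so every elementary abelian $p$-subgroup is, up to conjugacy, contained in some $Aut(\Gamma)$, and two such realizations are $Out(F_n)$-conjugate iff the corresponding fixed simplices lie in the same component of $K_n^E$. For a finite $p$-group $E$ the fixed set $K_n^E$ is itself a contractible subcomplex on which the normalizer $N(E)$ acts with finite stabilizers; one can therefore read off $N(E)$ from the cell structure of $K_n^E/N(E)$ via standard Bass--Serre-style arguments.

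For part (a) I would enumerate the minimal (under equivariant forest collapse) $E$-invariant graphs of rank $n$. An element of order $p$ permutes edges and vertices in orbits of size $1$ or $p$, and a Riemann--Hurwitz/Euler-characteristic count, together with the convention that minimal graphs have all vertices of valence $\geq 3$, severely restricts the possibilities. For $p$-rank $2$ one finds the unique minimal model $\Theta_{p-1}\vee\Theta_{p-1}$ with independent cyclic rotations of the two theta-subgraphs, producing $E_2$; for $p$-rank $1$ one obtains exactly $R_{2(p-1)}$, the $\Theta_{p-1}^{s,p-1-s}$ with $0\leq s\leq(p-1)/2$, and $\Delta(E_2)\subset E_2$. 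Part (b) then follows: $E_2\cong\Z/p\times\Z/p$ has $p+1$ subgroups of order $p$, namely two ``axes'' and $p-1$ ``diagonals''; the axes become conjugate to $E_{0,p-1}$ after collapsing an edge of the second $\Theta_{p-1}$ fixed by the first factor (producing $\Theta_{p-1}^{0,p-1}$), while the diagonals are mutually conjugate under the Weyl action of $(\Z/(p-1))^2\rtimes\Z/2\subset Aut(\Theta_{p-1}\vee\Theta_{p-1})$ and coincide with $\Delta(E_2)$. The remaining subgroups $E_R$ and $E_{s,p-1-s}$ with $s\geq 1$ cannot sit inside $E_2$, since this would contradict the classification in (a).

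For part (c) I would describe $K_n^E$ explicitly for each $E$ and read off $N(E)$. For $E_R$, the fixed set deformation retracts onto the spine of the outer space of the fixed rose $R_{p-2}$; writing $R_{2(p-1)}$ as a wedge of the $p$-cycle (contributing $N_{\Sigma_p}(\Z/p)$ together with the orientation-reversing $\Z/2$) with $R_{p-2}$ (contributing the stabilizer of the attaching vertex of the $p$-cycle inside $Aut(F_{p-1})$, which is $F_{p-2}\rtimes Aut(F_{p-2})$) yields the product decomposition of $N_R$. The cases $E_{s,p-1-s}$ are handled similarly: the two attached roses and the central $\Theta_{p-1}$ can be automorphed independently (with $N_{\Sigma_p}(\Z/p)$ acting on the $\Theta_{p-1}$), and an extra outer $\Z/2$ appears only when $s=(p-1)/2$. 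For $E_2$, both $\Z/p$-factors can independently be acted on by $N_{\Sigma_p}(\Z/p)$ with the two theta-subgraphs either preserved or swapped, giving the wreath structure. For $\Delta(E_2)$, $K_n^{\Delta(E_2)}$ is a bipartite tree whose vertex-types correspond to $\Theta_{p-1}\vee\Theta_{p-1}$ and to a secondary minimal graph admitting the diagonal action, with edge stabilizer $N_{\Sigma_p}(\Z/p)\times\Z/2$; Bass--Serre theory then yields the amalgamated product description of $N_\Delta$.

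Part (d) is read off directly from (c). An element of $N_2\cong N_{\Sigma_p}(\Z/p)\wr\Z/2$ normalizes the factor $E_{0,p-1}$ of $E_2$ iff it lies in the subgroup $N_{\Sigma_p}(\Z/p)\times N_{\Sigma_p}(\Z/p)$ without the $\Z/2$-swap; it normalizes the diagonal $\Delta(E_2)$ iff it acts on $E_2$ either by $Aut(\Z/p)$ diagonally or by interchanging the two factors, yielding $(\Z/p\times\Z/p)\rtimes(Aut(\Z/p)\times\Z/2)$. The main technical obstacle throughout is the explicit identification of the fixed subcomplexes $K_n^E$ and their equivariant homotopy types---most notably $K_n^{\Delta(E_2)}$, where one must exhibit a second minimal graph (other than $\Theta_{p-1}\vee\Theta_{p-1}$) carrying a diagonal $\Z/p$-action and verify the bipartite tree structure, which is precisely what produces the somewhat unexpected amalgamated product in the description of $N_\Delta$.
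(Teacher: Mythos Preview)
Your overall strategy is sound and your treatment of $N_\Delta$ matches the paper's exactly: the paper also shows that $K_n^{\Delta(E_2)}$ is a tree, identifies the second vertex type as the bipartite graph $K_{p,3}$, and invokes Bass--Serre theory. Part (d) is handled the same way in both.

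The main methodological difference lies in parts (a), (b), and the remaining normalizers in (c). You propose to work geometrically throughout, via the fixed subcomplexes $K_n^E$ and their $N(E)$-actions. The paper instead uses Krsti\'c's theory of $G$-equivariant Nielsen transformations as its primary engine. For (a) and (b) the paper invokes Krsti\'c's Theorem~2 (two reduced $G$-graphs realize conjugate subgroups iff they are connected by a chain of equivariant Nielsen moves) and simply checks that no such move changes the isomorphism type of $R_n$, $\Theta_{p-1}^{s,t}$, or $\Theta_{p-1}\vee\Theta_{p-1}$. For $N_R$, $N_{s,t}$, and $N_2$ the paper uses Krsti\'c's exact sequence
\[
1 \to Inn_G(\Pi(\Gamma)) \to Aut_G(\Pi(\Gamma)) \to C_{Out(F_n)}(G) \to 1
\]
together with the fact that every element of $Aut_G(\Pi(\Gamma))$ is a composite of equivariant Nielsen isomorphisms and a graph automorphism. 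This gives the centralizer directly and the normalizer by a short extension argument; no analysis of $K_n^E$ is required. Your geometric route would instead demand an explicit description of each $K_n^E$ and its quotient by $N(E)$, which is more laborious for $E_R$ and $E_{s,t}$ (these fixed sets are higher-dimensional, not trees).

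One place where your sketch is imprecise: in the $N_R$ case you write the rose as a wedge of a ``$p$-cycle'' with $R_{p-2}$ and refer to ``the stabilizer of the attaching vertex of the $p$-cycle inside $Aut(F_{p-1})$''. There is no $F_{p-1}$ in the picture; what actually appears (and what Krsti\'c's method makes transparent) is the group $F_{p-2}\times F_{p-2}$ of Nielsen moves $x_i\mapsto v^{-1}x_iw$ with $v,w$ words in the $p-2$ fixed petals, together with $Aut(F_{p-2})$ acting on those petals and the $\Z/2$ reversing the moving petals. Quotienting by inner automorphisms collapses this to $(F_{p-2}\rtimes Aut(F_{p-2}))\rtimes\Z/2$. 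Your heuristic would need to be sharpened to recover this, whereas the paper's algebraic approach produces it in a few lines.
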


\bigbreak

The evaluation of the normalizer spectral sequence  yields the 
following result.
 
\begin{theorem}\label{the:4}
Let $p>3$ be a prime and $n=2(p-1)$.
 
\smallskip
 
\noindent a) The inclusions of the subgroups $N_R$, $N_{s,p-s}$ and $N_2$ 
into $Out(F_n)$ induces a homomorphism of groups 
$$
N_R*N_{1,p-2}*N_{2,p-3}\ldots *N_{\frac{p-1}{2},\frac{p-1}{2}}*
(N_{0,p-1}*_{N_{0,p-1}\cap N_2}N_2)\to Out(F_n)
$$ 
and the induced map  
$$
H^*(Out(F_n);\FF_p)\to H^*(N_R;\FF_p)
\times \prod_{s=1}^{\frac{p-1}{2}}H^*(N_{s,p-1-s});\FF_p)
\times  H^*(N_{0,p-1}*_{N_{0,p-1}\cap N_2}N_2;\FF_p)
$$
is an isomorphism above dimension $2n-3$.
\smallskip
 
\noindent b) There is an epimorphism of $\FF_p$-algebras 
$$
H^*(N_{0,p-1}*_{N_{0,p-1}\cap N_2}N_2;\FF_p)\to 
H^*(N_2;\FF_p)
$$
whose kernel is isomorphic to the ideal
$H^*(N_{\Sigma_p}(\Z/p);\FF_p)\otimes K_{p-1}$ where
$K_{p-1}$ is the kernel of the restriction map
$H^*(Aut(F_{p-1});\FF_p)\to H^*(\Sigma_p;\FF_p)$. 
\end{theorem}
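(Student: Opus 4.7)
My approach is to run the normalizer spectral sequence of Brown \cite{Bro} associated with the action of $G:=Out(F_n)$ on the nerve $|\mathcal{A}|$ of the poset of non-trivial elementary abelian $p$-subgroups. Since the $p$-rank of $G$ is $2$, the nerve is at most $1$-dimensional, so the spectral sequence has only two columns $E_1^{0,*},\,E_1^{1,*}$ and converges to $H^*(G;\FF_p)$ above the vcd $2n-3$. Proposition \ref{pro:3} a)--b) lists the vertex- and edge-orbits precisely, giving
\[
E_1^{0,q}=H^q(N_R)\oplus\bigoplus_{s=0}^{(p-1)/2}H^q(N_{s,p-1-s})\oplus H^q(N_\Delta)\oplus H^q(N_2),
\]
\[
E_1^{1,q}=H^q(N_{0,p-1}\cap N_2)\oplus H^q(N_\Delta\cap N_2),
\]
with $d_1$ the alternating restriction map.

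The key lemma for part a) is that the inclusion $N_\Delta\cap N_2\hookrightarrow N_\Delta$ induces an isomorphism on mod-$p$ cohomology. To see this, I would apply Mayer-Vietoris to the amalgamation $N_\Delta\cong A*_B C$ of Proposition \ref{pro:3} c), with $A=N_\Delta\cap N_2$. Since $p>3$, both $H^*(\Z/2;\FF_p)$ and $H^*(\Sigma_3;\FF_p)$ collapse to $\FF_p$, so $H^*(C;\FF_p)\to H^*(B;\FF_p)$ is an isomorphism and Mayer-Vietoris identifies $H^*(N_\Delta;\FF_p)$ with $H^*(A;\FF_p)$. Consequently the $N_\Delta$-component of $d_1$ is already surjective, and the summands $H^*(N_\Delta)$ and $H^*(N_\Delta\cap N_2)$ cancel at $E_2$. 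What is left is a direct sum of the isolated-vertex contributions together with the two-term Mayer-Vietoris complex $H^*(N_{0,p-1})\oplus H^*(N_2)\to H^*(N_{0,p-1}\cap N_2)$, whose total cohomology equals $H^*(N_{0,p-1}*_{N_{0,p-1}\cap N_2}N_2;\FF_p)$. Combined via the usual mod-$p$ cohomology formula for free products, this is precisely the mod-$p$ cohomology of the group appearing on the left of the map in~a), and naturality of the edge homomorphism with respect to the inclusions $N_\bullet\to Out(F_n)$ identifies the abutment map with the map stated.

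For part b), I would analyse the Mayer-Vietoris sequence of the amalgamation $N_{0,p-1}*_{N_{0,p-1}\cap N_2}N_2$ directly. Using Proposition \ref{pro:3} c)--d) and Künneth, the restriction
\[
\alpha:H^*(N_{0,p-1})=H^*(N_{\Sigma_p}(\Z/p))\otimes H^*(Aut(F_{p-1}))\to H^*(N_{\Sigma_p}(\Z/p))^{\otimes 2}=H^*(N_{0,p-1}\cap N_2)
\]
is $\mathrm{id}\otimes\rho$, where $\rho:H^*(Aut(F_{p-1}))\to H^*(\Sigma_p)\cong H^*(N_{\Sigma_p}(\Z/p))$ is the restriction whose kernel is, by definition, $K_{p-1}$. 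Dually, $\beta:H^*(N_2)\to H^*(N_{0,p-1}\cap N_2)$ identifies with the inclusion of $\Sigma_2$-invariants in $H^*(N_{\Sigma_p}(\Z/p))^{\otimes 2}$. Establishing that $\alpha-\beta$ is surjective then collapses Mayer-Vietoris to a short exact sequence, after which projection onto the $H^*(N_2)$-summand yields the advertised $\FF_p$-algebra epimorphism, whose kernel is $\ker(\alpha)=H^*(N_{\Sigma_p}(\Z/p);\FF_p)\otimes K_{p-1}$ by Künneth.

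The main obstacle will be precisely the surjectivity of $\alpha-\beta$: it amounts to showing that together the image of $\rho$ and the $\Sigma_2$-symmetric tensors span all of $H^*(N_{\Sigma_p}(\Z/p))^{\otimes 2}$. Concretely I would need that $\rho$ is surjective, which has to be extracted from the explicit structure of $H^*(Aut(F_{p-1});\FF_p)$ using that $Aut(F_{p-1})$ has $p$-rank $1$ and the Farrell-cohomology techniques of \cite{J,GMV,GM}.
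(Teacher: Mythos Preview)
Your proposal is correct and follows essentially the same route as the paper's own argument in Section~6: both run the normalizer spectral sequence for the poset $\mathcal{A}$, both observe that for $p>3$ the restriction $H^*(N_\Delta;\FF_p)\to H^*(N_\Delta\cap N_2;\FF_p)$ is an isomorphism (the paper just says ``because $p>3$'', while you spell out the Mayer--Vietoris for the amalgam description of $N_\Delta$), cancel this acyclic subcomplex to reduce $\mathcal{A}_2$ to the single edge $E_{0,p-1}$--$E_2$, and identify the result with the cohomology of the amalgam via Lemma~\ref{amalgam}. For part~(b) you likewise match the paper: the key point is the surjectivity of $\rho:H^*(Aut(F_{p-1});\FF_p)\to H^*(N_{\Sigma_p}(\Z/p);\FF_p)$, and the paper supplies exactly the argument you anticipate---it is an isomorphism in Farrell cohomology by \cite{GMV}, and since $\mathrm{vcd}(Aut(F_{p-1}))=2p-5$ while $H^*(N_{\Sigma_p}(\Z/p);\FF_p)$ vanishes below degree $2p-3$, surjectivity holds in all degrees.
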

 
In view of Proposition \ref{pro:3} this result reduces the explicit 
calculation of the mod-$p$ cohomology of $Out(F_{2(p-1)})$ above the finite 
virtual cohomological dimension (v.c.d. in the sequel) to the 
calculation of the cohomology of $Aut(F_s)$ with $s\leq p-1$, i.e. to 
calculations in which the $p$-rank is one. 
If $s<p-1$ these cohomologies are finite, and if $s=p-1$ the cohomology has 
been calculated above the v.c.d. in \cite{GMV}. 
\bigbreak

We remark that the result for $p=3$ can also be derived by the 
normalizer method but requires special considerations which are 
caused by the existence of the graph $K_{3,3}$ and the corresponding 
additional elementary abelian $3$-subgroup $E_K\cong \Z/3\times \Z/3$. 
In addition in this approach a partial analysis of outer space 
$K_4$ still seems needed in order to determine the structure of some 
of the normalizers. We find it therefore preferable to present the 
case $p=3$ via the isotropy spectral sequence for the Borel construction 
of $(K_4)_s$.   
\bigbreak  
 
The remainder of this paper is organized as follows. In Section 2 we 
recall the definition of  the spine $K_n$ of outer space associated with 
$Out(F_n)$ and analyze the $3$-singular part of $K_4$, $(K_4)_s$ 
(cf. \cite{GM}).
In Section 3 we evaluate the isotropy spectral sequence of 
$(K_4)_s$, thus proving Theorem~1.1 and Corollary~1.2.
In Section 4 we discuss the poset of elementary abelian 
$p$-subgroups of $Out(F_{2(p-1)})$ for $p>3$ and prove part (a) and (b) of 
Proposition \ref{pro:3}. In Section 5 we study the normalizers of these elementary 
abelian $p$-subgroups and prove the remaining parts of the same proposition. 
Finally in Section~6 we derive the cohomological consequences and prove 
Theorem~1.4. 
\bigbreak 
 
We wish to thank the University of Heidelberg, the Max Planck 
Institute at Bonn, Ohio State University
and Universit\'e Louis Pasteur at Strasbourg for their support 
while this work was done. Also we wish to thank Dan Gries, 
Karen Li and Klavdija Kutnar for their help in the preparation 
of the graphics in this manuscript.

\section{The spine of outer space and $3$-singular graphs in the rank 
$4$ case}

\subsection{The spine of outer space }

We recall that the spine $K_n$ of outer space is defined as the
geometric realization of the poset of equivalence classes of
marked admissible finite graphs or rank $n$ (i.e. they have the 
homotopy type of the rose $R_n$), where the poset relation is 
generated by collapsing trees \cite{CV}. 
\bigbreak 

In more detail, for our purposes a finite graph $\Gamma$ is a quadruple 
$(V(\Gamma), E(\Gamma), \sigma, t)$ where $V(\Gamma)$ and $E(\Gamma)$  
are finite sets, $\sigma$ is a fixed point free involution of $E(\Gamma)$ and 
$t$ is a map from $E(\Gamma)$ to $V(\Gamma)$. To such a graph one can 
associate canonically a $1$-dimensional $CW$-complex with $0$-skeleton 
$V(\Gamma)$ and with $1$-cells in bijection with the $\sigma$-orbits 
of $E(\Gamma)$. The attaching map of a $1$-cell $e$ is given by $t(e)$, 
the terminal vertex of $e$, and by $t(\sigma(e))$, the initial vertex of $e$.  
A graph $\Gamma$ is called admissible if it 
(i.e. the associated $CW$-complex which will also be denoted by $\Gamma$) 
is connected, all vertices have valency at least $3$, 
and it does not contain any separating edges. A marking of a graph $\Gamma$ 
is a choice of a homotopy equivalence $\a:R_n\to \Gamma$. Two markings 
$R_n\buildrel{\a_i}\over \longrightarrow \Gamma_i$, $i=1,2$  
are equivalent if there exists a homeomomorphism $\varphi:\Gamma_1\to \Gamma_2$ 
such that $\a_2$ and $\varphi \a_1$ are freely homotopic.  

The poset relation is defined as follows:  
$R_n\buildrel{\a_2}\over \longrightarrow \Gamma_2$ is bigger than 
$R_n\buildrel{\a_1}\over \longrightarrow \Gamma_1$ if there exists 
a forest in $\Gamma_2$ such that $\Gamma_1$ is obtained from $\Gamma_2$ 
by collapsing each tree in this forest to a point, 
and $\a_1$ is freely homotopic to 
the composite of the $\a_2$ followed by the collapse map. It is clear 
that this induces a poset structure on equivalence classes of marked graphs. 

The group $Out(F_n)$ can be identified with the
group of free homotopy equivalences of $R_n$ to itself,
and with this identification we obtain a right action of $Out(F_n)$ on
$K_n$, given by precomposing the marking with an unbased 
homotopy equivalence of $R_n$ to itself.
\bigbreak 
 
The isotropy group of (the equivalence class of) a marked graph
$(\Gamma, \a)$ (with marking $\a$) can be identified via 
$\alpha_{*}^{-1}$ with the automorphism group $Aut(\Gamma)$ of 
the underlying graph $\Gamma$. (Note that graph automorphisms 
have to be taken in the sense of the definition of a graph given above, in 
particular they are allowed to reverse edges!) 

If $(\Gamma_i,\alpha_i)$, $0\leq i<k$, is obtained from $(\Gamma_k,\alpha_k)$ 
by collapsing a sequence of forests $\tau_i$ with $\tau_i\supset \tau_{i+1}$ 
then the isotropy group of the $k$ - simplex with vertices 
$(\Gamma_0,\alpha_0), 
\ldots,(\Gamma_k,\alpha_k)$ can be identified with the subgroup of  
$Aut(\Gamma_k)$ which leaves each of the forests $\tau_i$ invariant, 
cf. \cite{SV2}.
\bigbreak 

It is easy to check that an admissible graph of rank $n$ has at most 
$3n-3$ and at least $n$ edges, and the complex $K_n$ has 
dimension $2n-3$.
 
\subsection{The $3$ - singular graphs of rank $4$}

With these preparations we can now start discussing 
the $3$-singular locus $(K_4)_s$ of $K_4$, 
i.e. the subspace of $K_4$ of points whose isotropy 
groups contain non-trivial elements of order $3$. For this we first need to 
determine the graphs with a non-trivial element of order $3$ in its 
automorphism group (we call them $3$ - singular graphs) and then the 
invariant chains of forests in them which are preserved by a 
non-trivial element of order $3$. The necessary analysis is analogous 
to that of the $p$ - singular locus of $K_{p+1}$ for $p>3$ \cite{GM}. 
Our notation partially follows that of \cite{GM}, however we have chosen 
different notation for the graphs  
$\Theta_r^{s,t}$, $K_{3,3}$ (which was labelled $S_1$ in \cite{GM}) and 
$\Theta_2:\Theta_1$ (which was labelled $\Theta_2\ast \Theta_2$ in \cite{GM}). 

\bigbreak 

In this section we give a brief outline how one arrives at the list of 
$3$-singular graphs decribed below in Table~\ref{Tab2-1}.  
We already know that these graphs will have at least $4$ and at most $9$ edges.
\bigbreak

\begin{table}[ht!]
\begin{footnotesize}
\begin{center}
\includegraphics[width=0.48\hsize]{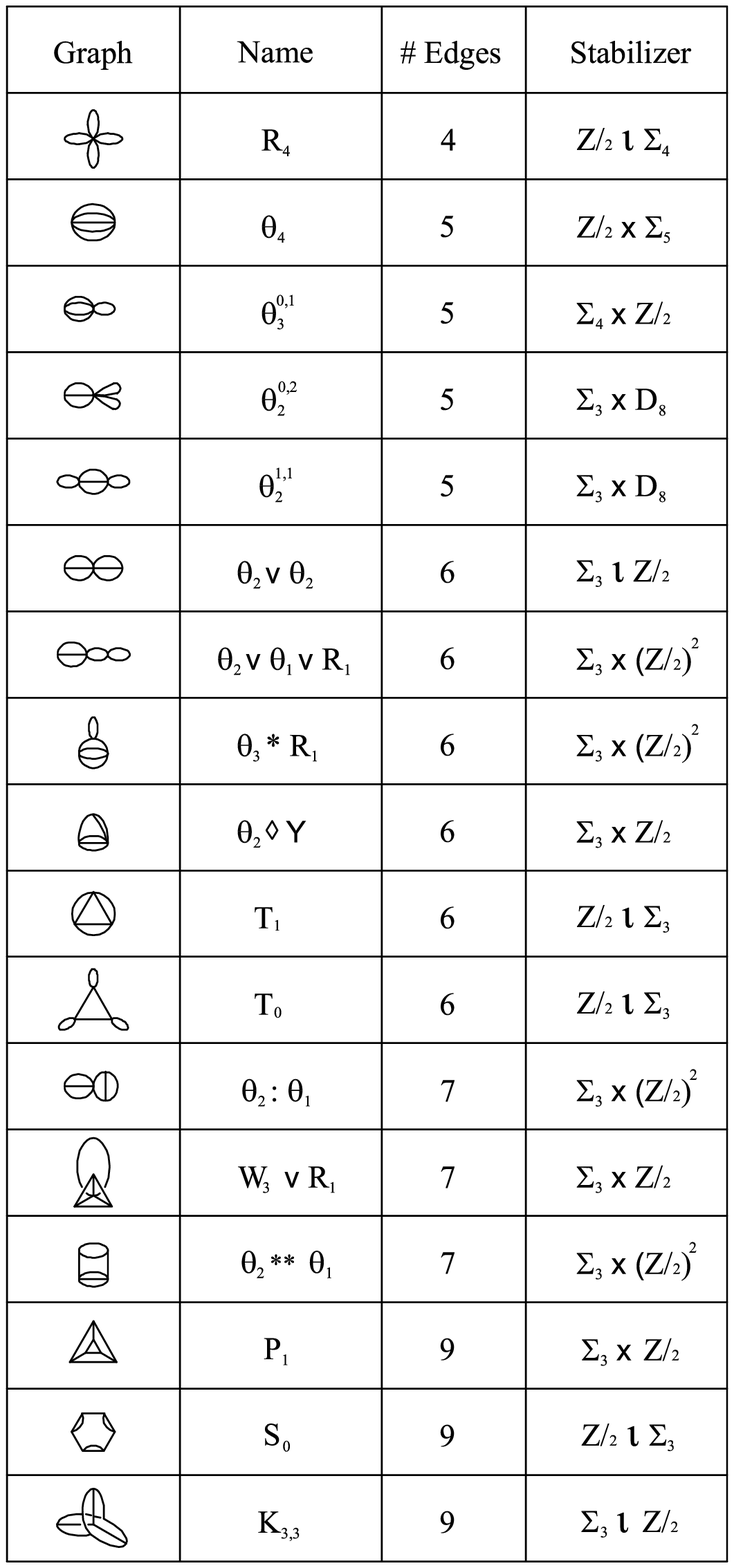}
\caption{\label{Tab2-1}\footnotesize  $3$-singular admissible graphs of 
rank $4$}
\end{center}
\end{footnotesize}
\end{table}

\medskip
 
\noindent {\em 2.2.1\ \ }With $4$ edges we only find the rose $R_4$ 
which is clearly $3$ - singular.  Its automorphism group is clearly 
the wreath product $\Z/2\wr\Sigma_4$.  
 
\medskip
  
\noindent  {\em 2.2.2 \ \ }With $5$ edges we need $2$ vertices 
which are necessarily fixed with respect to any graph automorphism of 
order $3$. In order to make the graph $3$-singular and 
admissible, we need to have at least $3$ edges connecting the two 
vertices. The resulting graphs and corresponding isomorphism groups are

\begin{eqnarray} 
\Theta_4       &\ \ & \Z/2\times \Sigma_5 \nonumber \\
\Theta_3^{0,1} &\ \ & \Sigma_4\times \Z/2 \nonumber\\
\Theta_2^{1,1} &\ \ & \Sigma_3\times D_8 \nonumber\\  
\Theta_2^{0,2} &\ \ & \Sigma_3\times D_8 \ . \nonumber
\end{eqnarray}

\medskip
 
\noindent {\em 2.2.3\ \ }With $6$ edges we need $3$ vertices. 
First we consider the case that a $3$ - Sylow subgroup 
$P$ of $Aut(\Gamma)$ fixes these vertices. If the $P$ - orbit of each
edge is non-trivial then we must have $2$ orbits of length $3$. 
The resulting graph and its automorphism group are 
$$
\Theta_2\vee \Theta_2 \ \ \ \ \ \Sigma_3\wr \Z/2 \ .
$$

If there is an edge which is fixed by $P$ then there there are three 
fixed edges and there is one orbit of edges of length $3$. In this 
case we have one of the following cases with corresponding automorphism groups  

\begin{eqnarray}
\Theta_2\vee\Theta_1\vee R_1  &\ \ & \Sigma_3\times (\Z/2)^2 \nonumber\\
\Theta_3\ast R_1              &\ \ & \Sigma_3\times (\Z/2)^2\nonumber\\
\Theta_2\diamond Y      &\ \ & \Sigma_3\times \Z/2  \ .   \nonumber
\end{eqnarray}
  
Now assume that $P$ permutes all three vertices. Then the number of 
edges between any two vertices is either $1$ or $2$ and we have one 
of the following cases

\begin{eqnarray}
T_0          &\ \ &  \Z/2\wr \Sigma_3 \nonumber\\
T_1          &\ \ &  \Z/2\wr \Sigma_3 \ . \nonumber
\end{eqnarray}

\medskip
\noindent {\em 2.2.4\ \ } Next we consider the case of $7$ edges and $4$ 
vertices. First we assume again that a $3$-Sylow subgroup $P$ of 
$Aut(\Gamma)$ fixes all vertices. Then we can have only one 
non-trivial $P$ orbit of edges (otherwise the graph would not be 
admissible) and we are in one of the following cases
\begin{eqnarray}
\Theta_2 :\Theta_1          &\ \ &  \Sigma_3\times (\Z/2)^2\nonumber\\
\Theta_2{\ast}{\ast}{\Theta_1}  &\ \ & \Sigma_3\times (\Z/2)^2  \ .\nonumber
\end{eqnarray}

If $P$ acts nontrivially on the vertices then we have an orbit of 
length $3$ and one orbit of length $1$. Admissibility forces that there are 
two orbits of edges of length $3$ and the graph and its automorphism 
group are
$$
W_3\vee R_1\ \ \ \ \ \ \ \ \Sigma_3\times \Z/2  \ . 
$$

\medskip

\noindent {\em 2.2.5\ \ } Now consider the case of $8$ edges and $5$ 
vertices. If a $3$-Sylow subgroup $P$ acts trivially on the 
vertices then $\Gamma$ 
contains $\Theta_2$ as subgraph (because it is supposed to be $3$ - 
singular and admissible) and the valency $3$ condition together 
with the requirement that there are no separating edges in $\Gamma$ 
imply that one would need more than $8$ edges and hence there is 
no such graph. If $P$ acts nontrivially on the vertices 
then the set of edges which have one of the moving vertices as an endpoint 
must form two orbits each of length $3$. Then the two fixed edges 
must join the two fixed vertices and because these vertices must have valency 
at least $3$ they must also be endpoints of moving edges. 
However, this violates the valency condition for the moving vertices 
and the fact there are only $8$ edges in $\Gamma$.   
\medskip
 
\noindent {\em 2.2.6\ \ }Finally we consider the case of $9$ edges
and $6$ vertices.
\noindent 
Then all vertices have valency $3$ and connectivity forces 
that $\Gamma$ cannot contain $\Theta_2$ as a subgraph. This implies 
that a nontrivial graph automorphism of order $3$ cannot fix all 
vertices. Furthermore, if such an automorphism does have fixed points 
then it has precisely three, and the valency condition implies that 
we are in the case
$$
K_{3,3}\ \ \ \ \ \ \ \  \Sigma_3\wr \Z/2 \  . 
$$  

If such an automorphism has no fixed points we consider the two 
orbits of vertices both of length $3$. If there are two vertices which 
have more than one edge joining them then admissibility of the 
graph requires these vertices to be in different orbits and thus 
there are $3$ pairs of vertices with precisely two edges between them. 
In this case the graph and its automorphism group is
$$
S_0\ \ \ \ \ \ \ \ \Z/2\wr \Sigma_3  \ .
$$ 

In the remaining case there are either no edges between vertices in 
the same orbit and we obtain again $K_{3,3}$, 
or among the three orbits of edges there are two each of which forms 
a triangle joining the vertices 
in the same orbit and the third orbit of edges connects both triangles. 
The resulting graph and its automorphism group are
$$
P_1\ \ \ \ \ \ \ \ \Sigma_3\times \Z/2   \ .
$$

\section{The isotropy spectral sequence in case $p=3$}

We recall that for a discrete group $G$ and a $G$-CW-complex $X$ there is an 
``isotropy spectral sequence" \cite{Bro} converging to the cohomology 
$H^*_G(X;\FF_p)$ of the Borel construction $EG\times_GX$. It takes the form  
$$
E_1^{p,q}\cong \prod_{\overline{\sigma}\in C_p(X)}H^q(Stab(\sigma);\FF_p)
\Rightarrow H^{p+q}_{G}(X;\FF_p) 
$$
where $\overline{\sigma}$ runs through the set of $G$-orbits of $p$-cells of 
$X$ and $Stab(\sigma)$ is the stabilizer of a representative $\sigma$ 
of this orbit.   

In this section we prove Theorem 1.1 and Corollary 1.2 by 
evaluating the isotropy spectral sequence for the action of $Out(F_4)$
on the singular locus $(K_4)_s$ and we compute thus the equivariant 
mod-$3$ cohomology $H^*_{Out(F_4)}((K_4)_s;\FF_3)$. 

We note that $H^*_{Out(F_4)}(K_4, (K_4)_s;\FF_3)$ 
is isomorphic to the cohomology of the quotient 
$H^*(Out(F_4)\backslash(K_4, (K_4)_s);\FF_3)$. In particular it vanishes 
in degrees bigger than $5$ and hence the result of our computation 
agrees with $H^*(Out(F_4);\FF_3)$ in degrees bigger than $5$.

\bigskip
\subsection{The cell structure of the quotient complex $(K_4)_s/Out(F_4)$}
\label{cell structure}

The $0$-cells of this quotient are in one to one correspondence with the 
$3$-singular graphs given in Table 1. 
\medbreak  
The $1$-cells are in bijection with pairs given by a $3$-singular 
graph $\Gamma$ together with an orbit of an invariant forest with 
respect to the action of  $Aut(\Gamma)$. By going through the list 
of $3$-singular graphs we get the list of $1$-cells given in Table 2. 
The first column in this table gives the vertices of each 
cell. (It turns out that there are no loops and that with the exception 
of two edges all edges are determined by their vertices.) The second column 
describes the invariant forest in the graph which appears first 
in the name of the $1$-cell; the tree $K_{3,1}$ in this column  
is given its standard name as the complete bipartite graph 
on one block of three vertices and one block of one vertex. 
The last column gives the abstract structure of the 
isotropy group of a representatitve of the edge in $(K_4)_s$; 
this abstract group is always to be regarded as a subgroup 
of the automorphism group of the first graph (with a fixed marking), 
namely as the subgroup which leaves the given forest invariant.

\medbreak 
Similarly, Table 3 resp. Table 4 give the lists of $2$-cells 
resp. $3$-cells. Again the first column gives the vertices, 
the second column the chain of forests to be collapsed and 
the last column the automorphism group of a representing $2$-simplex 
resp. $3$-simplex in $(K_4)_s$. (The maximal trees in the chain of forests 
describing the $3$-cells are supposed to be outside the 
subgraph $\Theta_2$.)
\medbreak 
We immediately see that the quotient complex $(K_4)_s/Out(F_4)$ has $3$ 
connected components which we will call the rose component resp. 
the $\Theta_2^{1,1}$ component resp. the $K_{3,3}$ component. 
We let $K_A$ resp. $K_B$ resp. $K_C$ be the preimages of these 
components in $(K_4)_s$. Then we have a canonical isomorphism 
$$
H^*_{Out(F_4)}((K_4)_s;\FF_3)\cong H^*_{Out(F_4)}(K_A;\FF_3)
\oplus H^*_{Out(F_4)}(K_B;\FF_3)\oplus H^*_{Out(F_4)}(K_C;\FF_3)
$$

The analysis of the first two summands is quite straight forward, 
while the analysis of the last one is more delicate. 
  
\begin{table}[htp]
{\footnotesize
\setlength{\arraycolsep}{1.35mm}
$$
\begin{array}{|l|c|c|}
\hline
\mbox{Vertices of the cell }&\mbox{Invariant forest}
&\mbox{Isotropy}\\
\hline\hline
\raisebox{0.0ex}[1.2\height]{$\Theta_2\ast\ast\Theta_1$, $\Theta_3\ast R_1$}&  
\text{one of the top horizontal edges} &\Sigma_3\times \Z/2  
\\ \hline
\raisebox{0.0ex}[1.2\height]{$\Theta_2\ast\ast\Theta_1$, $\Theta_2\diamond Y$}
&  \text{one of the vertical edges} &\Sigma_3\times \Z/2  
\\ \hline
\raisebox{0.0ex}[1.2\height]{$\Theta_2\ast\ast\Theta_1$, $\Theta_3^{0,1}$}&  
\text{a top horizontal edge and a vertical edge} &\Sigma_3  
\\ \hline 
\raisebox{0.0ex}[1.2\height]{$\Theta_2\ast\ast\Theta_1$, $\Theta_4$}&  
\text{both vertical edges} &\Sigma_3\times (\Z/2)^2  
\\ \hline  
\raisebox{0.0ex}[1.2\height]{$\Theta_2\ast\ast\Theta_1$, $R_4$}&  
\text{both vertical edges and a top horizontal edge} &\Sigma_3\times \Z/2  
\\ \hline   \hline 
\raisebox{0.0ex}[1.2\height]{$\Theta_3\ast R_1$, $\Theta_3^{0,1}$}&  
\text{an edge joining the base of $R_1$ to another vertex} 
&\Sigma_3\times \Z/2  
\\ \hline  
\raisebox{0.0ex}[1.2\height]{$\Theta_3\ast R_1$, $R_4$}&  
\text{both edges joining the base of $R_1$ to the other vertices} 
&\Sigma_3\times (\Z/2)^2  
\\ \hline  \hline 
\raisebox{0.0ex}[1.2\height]{$\Theta_2\diamond Y$, $\Theta_3^{0,1}$}&  
\text{one of the vertical right hand edges} &\Sigma_3  
\\ \hline 
\raisebox{0.0ex}[1.2\height]{$\Theta_2\diamond Y$, $\Theta_4$}&  
\text{vertical left hand edges} &\Sigma_3\times \Z/2  
\\ \hline 
\raisebox{0.0ex}[1.2\height]{$\Theta_2\diamond Y$, $R_4$}&  
\text{vertical left hand edge and a vertical right hand edge} 
&\Sigma_3  
\\ \hline \hline 
\raisebox{0.0ex}[1.2\height]{$\Theta_3^{0,1}$, $R_4$}&  
\text{one of the edges of $\Theta_3$} &\Sigma_3\times \Z/2  
\\ \hline \hline 
\raisebox{0.0ex}[1.2\height]{$\Theta_4$, $R_4$}&  
\text{one of the edges of $\Theta_4$} &\Sigma_4\times \Z/2  
\\ \hline \hline 
\raisebox{0.0ex}[1.2\height]{$W_3\vee R_1$, $R_4$}&  
\text{symmetric tree around the center} &\Sigma_3\times \Z/2  
\\ \hline \hline \hline 
\raisebox{0.0ex}[1.2\height]{$K_{3,3}$, $\Theta_2\vee \Theta_2$}
&\text{$K_{3,1}$}& \Sigma_3\times\Z/2     
\\ \hline  
\raisebox{0.0ex}[1.2\height]{$K_{3,3}$, $T_1$}      
&\text{Invariant forest made of $3$ disjoint edges}& \Sigma_3\times \Z/2  
\\ \hline \hline
\raisebox{0.0ex}[1.2\height]{$P_1$, $T_1$}&
\text{the three edges joining the two triangles}  
&\Sigma_3\times\Z/2  
\\ \hline \hline 
\raisebox{0.0ex}[1.2\height]{$S_0$, $T_1$}&
\text{Invariant forest of an orbit of a ``single edge"}&\Z/2\wr \Sigma_3  
\\ \hline  
\raisebox{0.0ex}[1.2\height]{$S_0$, $T_0$}&
\text{Invariant forest of an orbit of a ``double edge"}&\Sigma_3  
\\ \hline \hline 
\raisebox{0.0ex}[1.2\height]{$\Theta_2 :\Theta_1$, $\Theta_2\vee \Theta_2$}&
\text{one of the two edges joining $\Theta_2$ to the vertical 
$\Theta_1$}&\Sigma_3\times\Z/2  
\\ \hline  
\raisebox{0.0ex}[1.2\height]{$\Theta_2 :\Theta_1$, 
$\Theta_2\vee \Theta_1\vee R_1$}&
\text{one of the two edges of the vertical $\Theta_1$}&\Sigma_3\times \Z/2  
\\ \hline  
\raisebox{0.0ex}[1.2\height]{$\Theta_2 : \Theta_1$, $\Theta_2^{0,2}$}&
\text{the union of the two edges in the previous two cases}&\Sigma_3  
\\ \hline
\raisebox{0.0ex}[1.2\height]{$\Theta_2 : \Theta_1$, $\Theta_2^{0,2}$}&
\text{both edges joining $\Theta_2$ to the vertical $\Theta_1$}&
\Sigma_3\times (\Z/2)^2 
\\ \hline \hline 
\raisebox{0.0ex}[1.2\height]{$\Theta_2\vee \Theta_1\vee R_1$, $\Theta_2^{0,2}$}&
\text{one of the edges of $\Theta_1$}&\Sigma_3\times \Z/2  
\\ \hline \hline
\raisebox{0.0ex}[1.2\height]{$\Theta_2\vee \Theta_2$, $\Theta_2^{0,2}$}&
\text{any edge}&\Sigma_3\times \Z/2 
\\ \hline 
\end{array}
$$}
\caption{\label{1-cells} $1$-cells in $(K_4)_s/Out(F_4)$}
\end{table}

\begin{table}[htp]
{\footnotesize
\setlength{\arraycolsep}{1.35mm}
$$
\begin{array}{|l|c|c|}
\hline
\mbox{Cell}&\mbox{Chain of invariant forests}&\mbox{Isotropy}\\ 
\hline\hline
\raisebox{0.0ex}[1.2\height]{$\Theta_2\ast\ast\Theta_1$, $\Theta_3\ast R_1$, 
$\Theta_3^{0,1}$}&  
\text{top horizontal edge, then a vertical edge} &\Sigma_3  
\\ \hline
\raisebox{0.0ex}[1.2\height]{$\Theta_2\ast\ast\Theta_1$, $\Theta_3\ast R_1$, 
$R_4$}&  
\text{top horizontal edge, then maximal tree outside of $\Theta_2$} &
\Sigma_3\times \Z/2  
\\ \hline
\raisebox{0.0ex}[1.2\height]{$\Theta_2\ast\ast\Theta_1$, $\Theta_2\diamond Y$, 
$\Theta_3^{0,1}$}&  
\text{a vertical edge, then top horizontal edge } &\Sigma_3
\\ \hline  
\raisebox{0.0ex}[1.2\height]{$\Theta_2\ast\ast\Theta_1$, $\Theta_2\diamond Y$, 
$\Theta_4$}&  
\text{a vertical edge, then the other vertical edge } &\Sigma_3\times \Z/2  
\\ \hline 
\raisebox{0.0ex}[1.2\height]{$\Theta_2\ast\ast\Theta_1$, $\Theta_2\diamond Y$, 
$R_4$}&  
\text{a vertical edge, then maximal tree outside of $\Theta_2$} &\Sigma_3  
\\ \hline 
\raisebox{0.0ex}[1.2\height]{$\Theta_2\ast\ast\Theta_1$, $\Theta_3^{0,1}$, 
$R_4$}&  
\text{top horizontal and vertical edge, then maximal tree outside of 
$\Theta_2$} &\Sigma_3  
\\ \hline 
\raisebox{0.0ex}[1.2\height]{$\Theta_2\ast\ast\Theta_1$, $\Theta_4$, $R_4$}&  
\text{both vertical edges, then maximal tree outside of $\Theta_2$} &
\Sigma_3\times \Z/2  
\\ \hline \hline 
\raisebox{0.0ex}[1.2\height]{$\Theta_3\ast R_1$, $\Theta_3^{0,1}$, $R_4$}&  
\text{one edge, then both edges joining $\Theta_3$ with $R_1$} &
\Sigma_3\times \Z/2 
\\ \hline \hline 
\raisebox{0.0ex}[1.2\height]{$\Theta_2\diamond Y$, $\Theta_3^{0,1}$, $R_4$}&  
\text{one right hand vertical edge, then maximal tree outside of $\Theta_2$} &
\Sigma_3  
\\ \hline  
\raisebox{0.0ex}[1.2\height]{$\Theta_2\diamond Y$, $\Theta_4$, $R_4$}&  
\text{left hand vertical edge, then maximal tree outside of $\Theta_2$} &
\Sigma_3  
\\ \hline \hline \hline 
\raisebox{0.0ex}[1.2\height]{$\Theta_2:\Theta_1$, 
$\Theta_2\vee \Theta_1\vee R_1$,  
$\Theta_2^{0,2}$}&  \text{one edge of $\Theta_1$, then add edge between 
$\Theta_2$ and $\Theta_1$} 
&\Sigma_3  
\\ \hline  
\raisebox{0.0ex}[1.2\height]{$\Theta_2:\Theta_1$, $\Theta_2\vee \Theta_2$,  
$\Theta_2^{0,2}$}&  \text{one edge between $\Theta_2$ and $\Theta_1$, then 
both edges} 
&\Sigma_3\times \Z/2  
\\ \hline  
\raisebox{0.0ex}[1.2\height]{$\Theta_2:\Theta_1$, $\Theta_2\vee \Theta_2$, 
$\Theta_2^{0,2}$}&  \text{one edge between $\Theta_2$ and $\Theta_1$, then 
add edge of $\Theta_1$ } 
&\Sigma_3  
\\ \hline 
\end{array} 
$$}
\caption{\label{2-cells} $2$-cells in $(K_4)_s/Out(F_{4})$}
\end{table}

\begin{table}[htp]
{\footnotesize
\setlength{\arraycolsep}{1.35mm}
$$
\begin{array}{|l|c|c|}
\hline
\mbox{Cell}&\mbox{Chain of invariant forests}&\mbox{Isotropy}\\ 
\hline\hline
\raisebox{0.0ex}[1.2\height]{$\Theta_2\ast\ast\Theta_1$, $\Theta_3\ast R_1$, 
$\Theta_3^{0,1}$, $R_4$}&  
\text{top horizontal edge, then add a vertical edge, then maximal tree} 
&\Sigma_3 
\\ \hline
\raisebox{0.0ex}[1.2\height]{$\Theta_2\ast\ast\Theta_1$, $\Theta_2\diamond Y$, 
$\Theta_3^{0,1}$, $R_4$}&  
\text{one vertical edge, then top horizontal edge, then maximal tree} &
\Sigma_3  
\\ \hline
\raisebox{0.0ex}[1.2\height]{$\Theta_2\ast\ast\Theta_1$, $\Theta_2\diamond Y$, 
$\Theta_4$, $R_4$}&  
\text{one vertical edge, then both vertical edges, then maximal tree} 
&\Sigma_3 \\ 
\hline\end{array}
$$}
\caption{\label{3-cells} $3$-cells in $(K_4)_s/Out(F_4)$}
\end{table}

\subsection{The rose component}
  
This component turns out to be the realization of a poset which 
is described in Figure~\ref{Fig3-1}. As a simplicial complex 
it is a one point union of the $1$-cell with vertices $W_3\vee R_1$ and $R_4$, 
and the cone (with cone point $\Theta_2\ast\ast\Theta_1$) over the pentagon 
formed by the adjacent $2$-simplices with vertices $\Theta_3\ast R_1$, 
$\Theta_3^{0,1}$, $R_4$ resp. $\Theta_2\diamond Y$, $\Theta_3^{0,1}$, $R_4$ 
resp. $\Theta_2\diamond Y$, $\Theta_4$, $R_4$. 
In particular, the rose component is contractible. Furthermore 
in the $E_1$-term of the isotropy spectral sequence for $K_A$ the 
contribution of each cell is isomorphic to $H^*(\Sigma_3;\FF_3)$ 
and all faces induce the identity via this identification 
(cf. Lemma 4.1 of \cite{GM}). 
Therefore the equivariant cohomology turns out to be 
$$
H^*_{Out(F_4)}(K_A;\FF_3)\cong H^*(\Sigma_3;\FF_3)\cong H^*(G_R;\FF_3) \ .
$$

\begin{figure}[ht!]
\begin{footnotesize}
\begin{center}
\includegraphics[width=0.70\hsize]{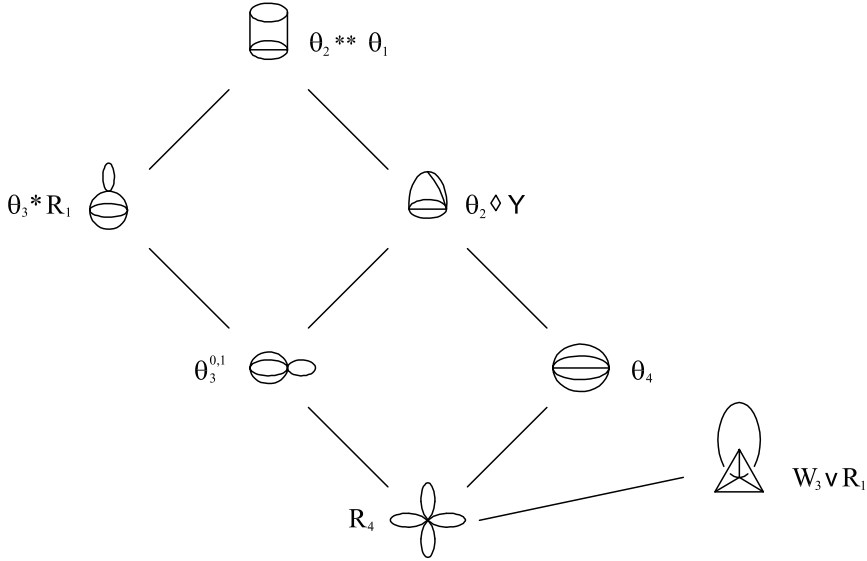}
\caption{\label{Fig3-1}\footnotesize  The rose component of
$Out(F_4)\setminus (K_4)_3$.}
\end{center}
\end{footnotesize}
\end{figure}

\subsection{The $\Theta_2^{1,1}$ component}
  
This component is even simpler; it consists of a 
single point and therefore we get 
$$
H^*_{Out(F_4)}(K_B;\FF_3)\cong H^*(\Sigma_3;\FF_3)\cong H^*(G_{1,1};\FF_3)\ . 
$$

\subsection{The $K_{3,3}$ component}
  
The geometry of the remaining component is as follows:
there is a ``critical edge'' joining $K_{3,3}$ to $\Theta_2\vee\Theta_2$,
there is a ``free part'' (which in Figure 4 is to the right of the critical 
edge 
and in which the $3$-Sylow subgroups of the automorphism groups of the graphs 
act freely on the graph), and there is the  ``fixed part'' which is attached to 
$\Theta_2\vee \Theta_2$ (on which the $3$-Sylow subgroups of the automorphism 
groups 
of the graphs fix at least one vertex of the graph),   
see Figure~\ref{Fig3-2}.
 
\bigbreak 
 
\begin{figure}[ht!]
\begin{footnotesize}
\begin{center}
\includegraphics[width=0.70\hsize]{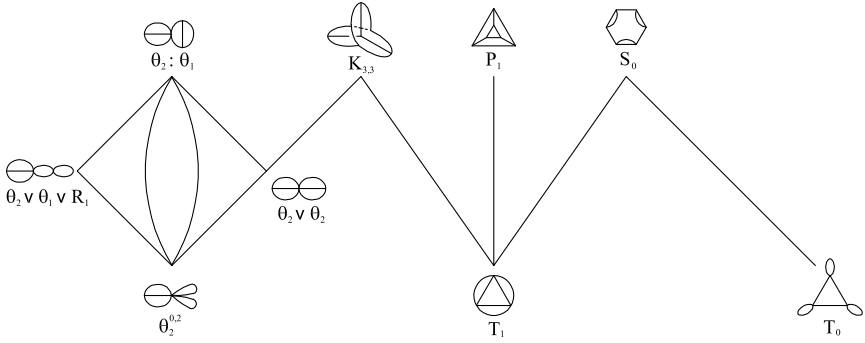}
\caption{\label{Fig3-2}\footnotesize  The $K_{3,3}$ component of 
$Out(F_4)\backslash(K_4)_s$}
\end{center}
\end{footnotesize}
\end{figure}
 
We note that in this figure there are two triangles both of whose vertices are 
$\Theta_2:\Theta_1$, $\Theta_2\vee \Theta_2$ and $\Theta_2^{0,2}$. 

\bigbreak

In the $E_1$-term of the isotropy spectral sequence for $K_C$ the contribution 
of each cell except those on the  ``critical edge" between $K_{3,3}$ and 
$\Theta_2\vee \Theta_2$ is isomorphic to $H^*(\Sigma_3;\FF_3)$ 
and all faces except those involving the critical edge induce the identity 
via this identification (cf. Lemma 4.1 of \cite{GM} again). This together 
with the observation that the critical edge is a deformation retract of the 
quotient 
$Out(F_4)\backslash K_C$ implies that the inclusion of the preimage 
$K_4^e$ of the critical edge into $K_C$ 
induces an isomorphism in equivariant cohomology 
$$
H^*_{Out(F_4)}(K_C;\FF_3)\to H^*_{Out(F_4)}(K_4^e;\FF_3)\ . 
$$

So it remains to calculate  $H^*_{Out(F_4)}(K_4^e;\FF_3)$. Because 
$K_4^e$ is a $1$-dimensional with quotient an edge the following lemma 
finishes off the proof of Theorem 1.1.

\begin{lemma}\label{amalgam} 
Let $G$ be a discrete group and $X$ a $1$-dimensional 
$G$-$CW$-complex with fundamental domain a segment. 
Let $G_1$ and $G_2$ be the isotropy groups of the vertices of the segment 
and $H$ be the isotropy group of the segment itself and let $A$ be any abelian 
group. Then there is a canonical isomorphism 
$$
H^*_G(X,A)\cong H^*(G_1*_HG_2,A) \ .
$$
\end{lemma}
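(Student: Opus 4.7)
The plan is to identify the Borel construction $EG\times_G X$ up to homotopy equivalence with the standard double mapping cylinder model for $B(G_1*_H G_2)$, from which the cohomology isomorphism follows immediately.

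First I would express $X$ as an equivariant pushout. Since the fundamental domain is a single segment, $X$ has one orbit of $1$-cells with stabilizer $H$ and two orbits of $0$-cells with stabilizers $G_1,G_2$. Because the two endpoints of the segment lie in distinct $G$-orbits, no element of $G$ exchanges them, and the restriction of each vertex stabilizer to the edge yields canonical inclusions $H\hookrightarrow G_i$. The equivariant cell structure then presents $X$ as the pushout
$$
X \;=\; \bigl((G/G_1)\sqcup (G/G_2)\bigr)\;\cup_{(G/H)\sqcup (G/H)}\; \bigl(G/H\times [0,1]\bigr),
$$
in which $G/H\times\{0\}\to G/G_1$ and $G/H\times\{1\}\to G/G_2$ are the canonical projections.

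Next I would apply the Borel functor $EG\times_G(-)$. Since $EG$ is $G$-free, this functor carries equivariant pushouts along cofibrations to homotopy pushouts and sends $G/K$ to a model of $BK$. Consequently
$$
EG\times_G X \;\simeq\; (BG_1\sqcup BG_2)\;\cup_{BH\sqcup BH}\; (BH\times [0,1]),
$$
namely the double mapping cylinder of the two maps $BH\to BG_i$ induced by the inclusions $H\hookrightarrow G_i$.

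Finally I would recognize this double mapping cylinder as a model for $B(G_1*_H G_2)$. This is standard: van Kampen applied to the obvious open cover identifies the fundamental group with $G_1*_H G_2$, while the universal cover is the Bass--Serre tree of the amalgamation, which is contractible. Hence $EG\times_G X$ is a $K(G_1*_H G_2,1)$, and the claimed isomorphism on $H^*(-;A)$ follows. The only delicate point worth checking is that the attaching data really match the inclusions used to define $G_1*_H G_2$; this is automatic here because the endpoints of the segment lie in distinct $G$-orbits, so the edge stabilizer embeds honestly into each vertex stabilizer and there is no orientation-reversal to worry about.
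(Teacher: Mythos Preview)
Your argument is correct. The paper takes a slightly different route: rather than decomposing $EG\times_GX$ as a homotopy pushout and then recognizing the result as $B(G_1*_HG_2)$, it writes down the canonical homomorphism $G_1*_HG_2\to G$ induced by the inclusions, observes that the Bass--Serre tree $T$ of the amalgam admits an equivariant map $T\to X$ over this homomorphism (sending the fundamental edge of $T$ to the given segment), and then compares the two isotropy spectral sequences, which agree already on the $E_1$-page because the orbit data and stabilizers match. Both arguments ultimately rest on the same fact---the Bass--Serre tree is contractible---but yours is more homotopy-theoretic, identifying $EG\times_GX$ directly as a space, while the paper's is a spectral-sequence comparison. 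The paper's method avoids invoking van Kampen and the universal-cover description, at the cost of constructing the equivariant map $T\to X$ by hand; your method makes the identification with $B(G_1*_HG_2)$ more transparent and exhibits the cohomology isomorphism as coming from an actual homotopy equivalence.
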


\begin{proof} The inclusions of $G_1$, $G_2$ and $H$ into $G$ 
determine a homomorphism from $G_1*_HG_2$ to $G$. Furthermore the 
tree $T$ associated to the amalgamated product admits a 
$G_1*_HG_2$-equivariant map to $X$ such that the induced map on spectral 
sequences converging to $H^*_G(X,A)$ resp. to 
$H^*_{G_1*_HG_2}(T,A)\cong H^*(G_1*_HG_2,A)$ is an isomorphism 
on $E_1$-terms. $\square$
\end{proof}

\subsection{The critical edge and the proof of Corollary 1.2}
 
\noindent The isotropy groups along this edge are as follows:
for $K_{3,3}$ and for $\Theta_2\vee\Theta_2$
we get both times $\Sigma_3\wr \Z/2$
while for the edge we get $\Sigma_3\times \Z/2$.
 
To understand the two inclusions of $\Sigma_3\times \Z/2$ into 
$\Sigma_3\wr \Z/2$
we observe that the edge is obtained by collapsing an invariant
$K_{3,1}$ tree, hence the isotropy group of the edge embeds into that of
$K_{3,3}$ via the embedding of $\Sigma_3\times \Z/2$ into
$\Sigma_3\times\Sigma_3$.
On the other hand it embeds into the isotropy group of
$\Theta_2\vee\Theta_2$ via the ``diagonal embedding''. 
 
The cohomology of the isotropy groups considered as abstract groups
is well known and given for the convenience of the reader 
in the following proposition.

\begin{proposition} 
a)
$$
H^*(\Sigma_3\times \Z/2;\FF_3)\cong H^*({\Sigma_3};\FF_3)\cong 
\FF_3[a_4]\otimes \Lambda(b_3)
$$
where the indices give the dimensions of the elements.
 
\medskip
b)
$$
H^*(\Sigma_3\wr \Z/2;\FF_3)\cong H^*(\Sigma_3\times\Sigma_3;\FF_3)^{\Z/2}
\cong (\FF_3[c_{4,1},c_{4,2}]\otimes \Lambda(d_{3,1},d_{3,2}))^{\Z/2}
$$
where the first index gives the dimension and the second index refers 
to the first resp. second copy of $\Sigma_3$. Consequently
$$
H^*(\Sigma_3\wr \Z/2;\FF_3)\cong \FF_3[c_{4},c_8]\otimes \Lambda(d_{3}, d_7)
$$
where
$$
c_4=c_{4,1}+c_{4,2}  \ \ c_8=(c_{4,1}-c_{4,2})^2
$$
$$
d_3=d_{3,1}+d_{3,2}  \ \  d_7=(c_{4,1}-c_{4,2})(d_{3,1}-d_{3,2}). \ \ \ \square
$$
\end{proposition}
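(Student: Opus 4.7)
The proof of Proposition 3.3 is a routine application of two standard facts of mod-$p$ cohomology, and my plan is as follows.

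For part (a), the key observation is that the order of $\Z/2$ is coprime to $3$, so $H^*(\Z/2;\FF_3)$ is trivial and the Künneth formula gives $H^*(\Sigma_3\times\Z/2;\FF_3)\cong H^*(\Sigma_3;\FF_3)$. The remaining computation of $H^*(\Sigma_3;\FF_3)$ is classical: since a Sylow $3$-subgroup of $\Sigma_3$ is the cyclic group $\Z/3$ with $H^*(\Z/3;\FF_3)=\FF_3[x]\otimes\Lambda(y)$ (where $|x|=2$, $|y|=1$ and $y=\beta^{-1}x$ with $\beta$ the Bockstein), and since the normalizer of $\Z/3$ in $\Sigma_3$ is all of $\Sigma_3$ with Weyl group $\Z/2$ acting by inversion $x\mapsto -x$, $y\mapsto -y$, the stable elements are precisely the invariants. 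These are generated by $a_4=x^2$ and $b_3=xy$, with $b_3^2=0$ in characteristic $3$, giving the asserted formula.

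For part (b), the subgroup $\Sigma_3\times\Sigma_3$ has index $2$ in $\Sigma_3\wr\Z/2$, which is again coprime to $3$. Hence the restriction map identifies $H^*(\Sigma_3\wr\Z/2;\FF_3)$ with the $\Z/2$-invariants of $H^*(\Sigma_3\times\Sigma_3;\FF_3)\cong \FF_3[c_{4,1},c_{4,2}]\otimes\Lambda(d_{3,1},d_{3,2})$, where the $\Z/2$-action swaps the two tensor factors. It remains to compute this invariant ring explicitly. I would verify that the four classes $c_4,c_8,d_3,d_7$ defined in the statement are invariant (the first and third as sums of transposed classes, the second and fourth as products of two swap-anti-invariants), and then check that they indeed generate the invariants with the announced relations. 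Because $2$ is invertible in $\FF_3$, one can recover $c_{4,1}c_{4,2}$ from $c_4$ and $c_8$, so $\FF_3[c_4,c_8]$ equals the polynomial invariants $\FF_3[c_{4,1},c_{4,2}]^{\Z/2}$. For the exterior part, one checks that $d_3^2=0$ (as $d_3$ is odd-degree) and that $d_7^2=c_8\cdot d_{3,1}d_{3,2}$, which together with $d_3d_7$ accounts for the anti-invariant basis element $d_{3,1}d_{3,2}$ multiplied by polynomial invariants.

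The only mildly delicate point is matching the module structure correctly: a Poincaré series comparison between the two sides (equivalently, a basis-counting argument in each bidegree of the bigrading by polynomial and exterior parts) confirms that no further generators or relations are needed, so the map $\FF_3[c_4,c_8]\otimes\Lambda(d_3,d_7)\to H^*(\Sigma_3\wr\Z/2;\FF_3)$ is an isomorphism. I expect no real obstacle; the work is purely a routine invariant-theory verification which the authors understandably state without detail.
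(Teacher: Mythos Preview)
Your approach is the standard one and essentially what the authors have in mind --- in fact the paper gives no proof at all, merely stating the result as well known (the proposition ends with a $\square$ immediately after the statement). Your outline for (a) and the reduction in (b) to $\Z/2$-invariants via the index-$2$ transfer are both correct.

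There is, however, a computational slip in your analysis of the invariants in (b). You claim $d_7^2=c_8\cdot d_{3,1}d_{3,2}$, but $d_7$ has odd degree $7$, so $d_7^2=0$ in any graded-commutative $\FF_3$-algebra (concretely, $(d_{3,1}-d_{3,2})^2=0$). More importantly, $d_{3,1}d_{3,2}$ is \emph{anti}-invariant under the swap, since the $d_{3,i}$ are of odd degree and hence anticommute; so products of $d_{3,1}d_{3,2}$ with polynomial invariants lie in the anti-invariant part and need not be accounted for. The correct bookkeeping is that the invariant subring is the sum of (polynomial invariants)$\otimes$(exterior invariants) and (polynomial anti-invariants)$\otimes$(exterior anti-invariants). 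The polynomial anti-invariants form the free $\FF_3[c_4,c_8]$-module on $c_{4,1}-c_{4,2}$, and the exterior anti-invariants are spanned by $d_{3,1}-d_{3,2}$ and $d_{3,1}d_{3,2}$; pairing these gives precisely $d_7$ and $(c_{4,1}-c_{4,2})d_{3,1}d_{3,2}=d_3d_7$ (using $-2=1$ in $\FF_3$). This yields exactly $\FF_3[c_4,c_8]\{1,d_3,d_7,d_3d_7\}\cong\FF_3[c_4,c_8]\otimes\Lambda(d_3,d_7)$, as claimed. Your proposed Poincar\'e series comparison would have caught and corrected this, so the overall plan is sound.
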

 
Next we turn towards the description of the restriction maps.
Let $\alpha$ denote the map in mod $3$ cohomology induced by the inclusion of
$\Sigma_3\times\Z/2$ into the isotropy group
$G_K$ and $\beta$ be the map induced
by the inclusion of $\Sigma_3\times\Z/2$ into $G_2$.

We change notation and write
$$
H^*(G_K;\FF_3)\cong\ \FF_3[x_4,x_8]\otimes \Lambda(u_3,u_7)
$$
$$
H^*(G_2;\FF_3)\cong
\FF_3[y_4,y_8]\otimes \Lambda(v_3,v_7)
$$
and
$$
H^*(\Sigma_3\times\Z/2;\FF_3)\cong \FF_3[z_4]\otimes \Lambda(w_3)\ .
$$
  
Then the effect of the two restriction maps is as follows:
$$
\alpha(x_4)=z_4, \ \ \alpha(x_8)=z_4^{2}, \ \ \alpha(u_3)=w_3, 
\ \ \alpha(u_7)=z_4w_3
$$
and
$$
\beta(y_4)=2z_4, \ \ \beta(y_8)=0, \ \ \beta(v_3)=2w_3, \ \ \beta(v_7)=0\ .
$$

\begin{proposition} 

a) The inclusions of $K_4^e$ into the $Out(F_4)$-
orbits of $K_{3,3}$ and $\Theta_2\vee\Theta_2$
induce an isomorphism 
$$
H^*_{Out(F_4)}(K_4^e;\FF_3)\cong Eq(\a,\b)
$$
between $H^*_{Out(F_4)}(K_4^e;\FF_3)$
and the subalgebra of 
$\FF_3[x_4,x_8]\otimes \Lambda(u_3,u_7)\times
\FF_3[y_4,y_8]\otimes \Lambda(v_3,v_7)$
equalized by the maps $\alpha$ and $\beta$.
 
b) This equalizer contains the tensor product of the
polynomial subalgebra generated by the elements
$r_4=(x_4,2y_4)$ and $r_8=(x_8,y_4^2+y_8)$ with the exterior
algebra generated by the element $s_3=(u_3,2v_3)$,
and as a module over this tensor product it is free on generators
$1,t_7,\ww{t_7}, t_8$, i.e.we can write
$$
H^*_{Out(F_4)}(X_4^e;\FF_3)\cong
\FF_3[r_4,r_8]\otimes \Lambda(s_3)\{1,t_7,\ww{t_7}, t_8\}
$$
with $1=(1,1)$, $t_7=(0,v_7)$, $\ww{t_7}=(u_7,y_4v_3)$, $t_8=(0,y_8)$.

c) The additional multiplicative relations in this algebra
are given as  
$$
t_7^2=\ww{t_7}{ }^{2}=0, \  t_8^2=(r_8-r_4^2)t_8, 
\ \ww{t_7}t_7=r_4s_3t_7,  \  t_8t_7=(r_8-r_4^2)t_7, \ 
t_8\ww{t_7}=r_4s_3t_8 \ .
$$ 
\end{proposition}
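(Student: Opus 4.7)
The plan is to handle the three parts sequentially; the main computational work lies in the module analysis of part (b).

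For part (a), I would invoke Lemma~\ref{amalgam} to replace $H^*_{Out(F_4)}(K_4^e;\FF_3)$ by $H^*(G_K*_H G_2;\FF_3)$ and then apply the Mayer--Vietoris sequence for the amalgamated product,
$$
\cdots\to H^{n-1}(H;\FF_3)\to H^n(G_K*_H G_2;\FF_3)\to H^n(G_K;\FF_3)\oplus H^n(G_2;\FF_3)\stackrel{\alpha-\beta}{\longrightarrow} H^n(H;\FF_3)\to\cdots
$$
The crucial observation is that $\alpha$ is by itself already surjective onto $H^*(H;\FF_3)=\FF_3[z_4]\otimes\Lambda(w_3)$, since $\alpha(x_4)=z_4$ and $\alpha(u_3)=w_3$. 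Hence $\alpha-\beta$ is surjective in every degree, the long exact sequence collapses into short exact sequences, and $H^n(G_K*_H G_2;\FF_3)$ is identified with $\mathrm{Eq}(\alpha,\beta)$.

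For part (b), I would first check by direct substitution that $r_4,r_8,s_3,t_7,\widetilde{t_7},t_8$ all lie in $\mathrm{Eq}(\alpha,\beta)$; each verification is one line in $\FF_3$ (using $4\equiv 1$). The subalgebra $A=\FF_3[r_4,r_8]\otimes\Lambda(s_3)$ has the stated form because the first-coordinate projection sends it injectively onto $\FF_3[x_4,x_8]\otimes\Lambda(u_3)\subset H^*(G_K;\FF_3)$. To prove that the equalizer is free of rank four over $A$ on the proposed generators, I would combine a Poincar\'e-series calculation with a direct linear-independence argument. The short exact sequence of (a) yields
$$
P_{\mathrm{Eq}}(t)=2P_{G_K}(t)-P_H(t)=\frac{(1+t^3)(1+2t^7+t^8)}{(1-t^4)(1-t^8)}=P_A(t)\cdot(1+2t^7+t^8),
$$
which matches the series predicted by the claimed module basis. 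For injectivity, given a hypothetical relation $a_1+a_2 t_7+a_3\widetilde{t_7}+a_4 t_8=0$ with $a_i\in A$, I would project to the first coordinate: using the splitting $H^*(G_K;\FF_3)=(\FF_3[x_4,x_8]\otimes\Lambda(u_3))\oplus(\FF_3[x_4,x_8]\otimes\Lambda(u_3))\cdot u_7$ and the injectivity of $A$ into $\FF_3[x_4,x_8]\otimes\Lambda(u_3)$, this forces $a_1=a_3=0$. Projecting the remaining relation $a_2 t_7+a_4 t_8=0$ to the second coordinate and reading off the coefficients relative to the basis $\{1,v_3,v_7,v_3v_7\}$ of $\Lambda(v_3,v_7)$ over $\FF_3[y_4,y_8]$ then forces $a_2=a_4=0$.

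For part (c), each of the six relations is a coordinate-wise check. The squares $t_7^2=\widetilde{t_7}^2=0$ are immediate from $u_7^2=v_7^2=v_3^2=0$. For the remaining four, the useful identities in characteristic $3$ are $r_8-r_4^2=(x_8-x_4^2,\,y_8)$ and $r_4 s_3=(x_4 u_3,\,y_4 v_3)$; substituting these and using the exterior squares gives the claimed equalities in one line each. The only genuine obstacle in the whole argument is the freeness step in (b): the Poincar\'e-series match alone is insufficient, and one needs the two-coordinate projection to certify that the proposed list really is a module basis. Every other step is either a direct substitution in $H^*(G_K;\FF_3)\times H^*(G_2;\FF_3)$ or a standard invocation of Mayer--Vietoris.
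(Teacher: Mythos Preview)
Your proposal is correct and follows essentially the same route as the paper: the surjectivity of $\alpha$ gives the equalizer description in (a), the Poincar\'e-series identity combined with linear independence over $A$ gives the module structure in (b), and direct coordinate computation gives the relations in (c). You have simply fleshed out steps the paper leaves as ``easy to check'', in particular the two-stage projection argument for linear independence, which the paper asserts without detail.
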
  
 
\begin{proof} a) It is clear that $\alpha$ is onto and this implies 
that $H^*_{Out(F_4)}(K_4^e;\FF_3)$  
is given as the equalizer of $\alpha$ and $\beta$.  

b) It is straightforward to check that all of the elements
$r_4$, $r_8$, $s_3$ and $t_i$ are contained in the equalizer
and that the subalgebra generated by $r_4,r_8$
and $s_3$ has structure as claimed. It is also easy to check
that the remaining elements
$1,t_7,\ww{t_7}$ and $t_8$ are linearly independent over this subalgebra.
Furthermore we have the following equation
for the Euler Poincar\'e series $\chi$ of the equalizer:
$$
\chi + \frac{1+t^3}{1-t^4}=2\frac{(1+t^3)(1+t^7)}{(1-t^4)(1-t^8)} \ .
$$
From this we get
$$
\chi=\frac{(1+t^3)(1+2t^7+t^8)}{(1-t^4)(1-t^8)} 
$$
and hence the result. 

c) The additional multiplicative relations in the algebra
$H^*_{Out(F_4)}(X_4^e;\FF_3)$
can be easily determined by considering it as subalgebra of
$\FF_3[x_4,x_8]\otimes \Lambda(u_3,u_7)\times 
\FF_3[y_4,y_8]\otimes \Lambda(v_3,v_7)$. 
$\square$
\end{proof}
 
\bigskip\bigskip
 
\section{Elementary abelian p-subgroups in $Out(F_{2(p-1)})$ for $p>3$}

In this section we determine the conjugacy classes 
of elementary abelian $p$-subgroups of the group $Out(F_{2(p-1)})$. 
The strategy is as follows. If $G$ is a finite subgroup then 
by results of Culler \cite{Cu} and Zimmermann \cite{Z} there exists a 
finite graph $\Gamma$ with $\pi_1(\Gamma)\cong F_n$ and a subgroup 
$G'$ of $Aut(\Gamma)$ such that $G'$ gets identified with 
$G$ via the induced outer action on $\pi_1(\Gamma)\cong F_n$.  
In the sequel we will call such a graph a $G$-graph. 
Changing the marking of a $G$-graph amounts to changing $G$ 
within its conjugacy class.

\subsection{Reduced $\Z/p$-graphs of rank $n=2(p-1)$}

\smallbreak\noindent
We say that $\Gamma$ is $G$-reduced, 
if $\Gamma$ does not contain any $G$-invariant forest. By collapsing 
tress in invariant forests to a point we may assume that the graph $\Gamma$ 
in the result of Culler and Zimmermann is reduced. We thus get an upper 
bound for the conjugacy classes in terms of isomorphism classes of reduced 
graphs of rank $n$ with a $\Z/p$-symmetry.  
\medbreak 
\noindent 
We thus proceed to classify $\Z/p$-reduced graphs of rank $n=2(p-1)$. 
\medbreak
\noindent
a) If there is a vertex $v_1$ which is fixed by $\Z/p$ and if $e$ is any 
edge joining this vertex to any distinct vertex $v_2$ then $v_2$ 
has to be also fixed (otherwise the orbit of this edge gives an 
invariant forest). Therefore if there is one vertex which is fixed 
then all vertices will be fixed. 
\medbreak
\noindent
b) If there is only one fixed vertex then the graph has to be 
the rose $R_{2(p-1)}$ and $\Z/p$ acts transitively on $p$ 
of the edges of the rose and fixes the others. We choose a marking 
so that we obtain an isomorphic subgroup in $Out(F_n)$ which we 
denote $E_R$.  
\medbreak 
\noindent 
c) If we have two fixed vertices then 
any edge between them cannot be fixed because otherwise it would 
define an invariant forest. Therefore $\Gamma$ contains $\Theta_{p-1}$ 
and there cannot be any other edge between the two vertices because they 
would have to be fixed and thus there would be an invariant forest. 
Consequently the graph has to be isomorphic to $\Theta_{p-1}^{s,t}$ 
with $s+t=p-1$ and $0\leq s\leq \frac{p-1}{2}$. After having
chosen a marking, we get a subgroup of $\Z/p\cong E_{s,p-1-s}$ of 
$Out(F_n)$. 
\medbreak\noindent 
d) If there are three fixed vertices then $\Gamma$ has to be isomorphic to 
$\Theta_{p-1} \vee \Theta_{p-1}$ and $\Z/p$ acts non-trivially on both 
$\Theta_{p-1}$'s. In this case the action of $\Z/p$ can be extended to an 
action of $\Z/p\times \Z/p$ with the left resp. right hand factor 
$\Z/p$ acting on the left resp. right hand copy of $\Theta_{p-1}$. 
After having chosen a marking, we get a subgroup $\Z/p\times\Z/p\cong E_2$ 
of $Out(F_n)$. Its diagonal will be denoted $\Delta(E_2)$. 
\medbreak\noindent 
e) Clearly there are no reduced graphs of rank $2(p-1)$ which have more than 
$3$ fixed vertices. 
\medbreak\noindent  
f) If $\Z/p$ acts without fixed vertex then there are also no fixed edges and 
we get for the Euler characteristic 
$$
1-2(p-1)=\chi(\Gamma)\equiv 0 \mod (p) 
$$
and hence $p=3$. 

\medbreak
\noindent 
We have thus proved the following result (cf. Proposition 4.2 of \cite{J}). 

\begin{proposition}\label{reduced graphs} Let $p>3$ be a prime and $\Gamma$ 
be a reduced $\Z/p$-graph of rank $n=2(p-1)$.  
Then $\Gamma$ is isomorphic to one of the graphs $R_n$, 
$\Theta_{p-1}^{s,t}$ with $s+t=n$ and $0\leq s\leq \frac{p-1}{2}$, 
or $\Theta_{p-1}\vee \Theta_{p-1}$ (with diagonal action of $\Z/p$).  
\ \ \ $\square$ 
\end{proposition}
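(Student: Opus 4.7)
The plan is to classify reduced $\Z/p$-graphs $\Gamma$ of rank $n=2(p-1)$ by case analysis on the $\Z/p$-fixed-point set of vertices, using reducedness (no invariant forest) as the main combinatorial constraint. The key first observation is that if $v_1$ is a fixed vertex and $e$ is an edge from $v_1$ to a distinct vertex $v_2$, then $v_2$ must also be fixed --- otherwise the $\Z/p$-orbit of $e$ consists of $p$ disjoint edges meeting only at $v_1$, an invariant forest contradicting reducedness. Combined with the connectedness of $\Gamma$, this dichotomy forces either every vertex or no vertex to be fixed.

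In the all-fixed case I would next note that between two distinct fixed vertices a single edge can never itself be fixed (a solitary invariant edge is already an invariant tree), so edges between distinct fixed vertices occur in free $\Z/p$-orbits of size $p$, each such orbit forming a copy of $\Theta_{p-1}$ and contributing rank $p-1$; loops at a fixed vertex contribute rank $1$ if fixed and rank $p$ if they lie in a free $\Z/p$-orbit. A direct rank count against the budget $n=2(p-1)$ then yields exactly the three listed configurations. One fixed vertex forces $R_{2(p-1)}$ with a single $\Z/p$-orbit of $p$ loops and $p-2$ fixed loops (since $p+(p-2)=2(p-1)$), giving $E_R$. Two fixed vertices force exactly one between-vertex $\Z/p$-orbit (two such orbits would produce rank $2p-1>n$) together with $s+t=p-1$ fixed loops split across the two vertices, giving $\Theta_{p-1}^{s,t}$ with $0\le s\le\frac{p-1}{2}$ after the vertex-swapping automorphism, and the subgroups $E_{s,p-1-s}$. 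Three fixed vertices force exactly two between-vertex orbits meeting at a common vertex and no loops (since $(p-1)+(p-1)=n$), giving $\Theta_{p-1}\vee\Theta_{p-1}$; here the $\Z/p$-action extends canonically after marking to a factor-wise $\Z/p\times\Z/p$-action, producing $E_2$ and its diagonal $\Delta(E_2)$. Four or more fixed vertices would force, by connectedness, at least three between-vertex orbits contributing rank $\ge 3(p-1)>n$, which is impossible.

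For the no-fixed-vertex case, since $p$ is odd the action on edges must also be free: an edge fixed setwise by an element of order $p$ would either fix its two endpoints (producing a fixed vertex) or swap them (requiring order $2$), both contradictions. Hence $p$ divides both $V$ and $E$, so $\chi(\Gamma)=V-E\equiv 0\pmod p$; but $\chi(\Gamma)=1-n=3-2p\equiv 3\pmod p$, which vanishes only when $p=3$, contrary to the hypothesis $p>3$. The main obstacle in executing this plan is the rank bookkeeping in the all-fixed case: one must verify carefully that the listed configurations exhaust the possibilities, i.e.\ that no additional parallel between-vertex orbits, loop orbits in the two- or three-vertex cases, or triangle configurations among three fixed vertices are compatible with the rank budget $n=2(p-1)$, with everything else following cleanly from the forest/reducedness dichotomy and the Euler-characteristic obstruction.
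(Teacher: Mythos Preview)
Your proposal is correct and follows essentially the same approach as the paper: the same dichotomy (all vertices fixed versus none fixed) via the invariant-forest observation, the same case analysis on the number of fixed vertices with rank bookkeeping, and the same Euler-characteristic obstruction to rule out free actions when $p>3$. Your write-up is in fact slightly more explicit about the rank counts and the exclusion of edge-inversions in the free case than the paper's terse items (a)--(f), but the argument is the same.
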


\subsection{Nielsen transformations of $G$-graphs and 
conjugacy classes of finite subgroups of $Out(F_n)$} 

In order to distinguish conjugacy classes of elementary abelian 
$p$-subgroups 
we make use of Krstic's theory of equivariant Nielsen transformations \cite{K} 
which we will also use to determine most of 
the centralizers and normalizers of these elementary abelian subgroups.

\begin{definition} 
Let $G$ be a finite subgroup of $Out(F_n)$ and $\Gamma$ be a reduced 
$G$-graph of rank $n$ with vertex set $V$ and edge set $E$. Suppose 
\smallbreak
$\bullet$ $e_1$ and $e_2$ are edges such that $e_2$ is neither in 
the orbit of $e_1$ nor of its opposite $\sigma({e_1})$, 
\smallbreak 
$\bullet$ $e_1$ and $e_2$ have the same terminal points, $t(e_1)=t(e_2)$,  
\smallbreak 
$\bullet$ the stabilizer of $e_1$ in $G$ is contained in that of $e_2$.
\medbreak  
Then there is a unique graph  $\Gamma'$ 
with $V'=V$, $E'=E$, the same $G$-action as on $\Gamma$, $\sigma'=\sigma$, 
$t'(e)=e$ if $e$ is not in the orbit of $e_1$, and $t'(ge_1)=t(\sigma(ge_2))$ 
for every $g\in G$. This graph is denoted $<e_1,e_2>\Gamma$ 
and the assignment $\Gamma\mapsto <e_1,e_2>\Gamma$   
is called a Nielsen transformation.  
\end{definition}

In the situation of this definition there is a $G$-equivariant isomorphism, 
also called a $G$-equivariant Nielsen isomorphism, 
$<e_1,e_2>:\Pi(\Gamma)\to \Pi(\Gamma')$ of fundamental groupoids;  
it is determined by its map on edges by $<e_1,e_2>(e)=e$ if $e$ is 
neither in the $G$-orbit of $e_1$ or $\sigma(e_1)$, and 
$<e_1,e_2>(ge_1)=g(e_1e_2)$ 
for every $g\in G$.

\begin{proposition} Let $p>3$ be a prime.  

\noindent
a) The groups $E_R$, $E_{s,p-1-s}$ for 
$0\leq s\leq \frac{p-1}{2}$, $E_2$ and the diagonal $\Delta(E_2)$ 
of $E_2$ are pairwise non-conjugate, and any 
elementary abelian $p$-subgroups of $Out(F_{2(p-1)})$ is 
conjugate to one of them.
\smallbreak\noindent  
b) $E_{0,p-1}$ is conjugate to the subgroup $\Z/p$ of $E_2$ which acts 
only on the left hand $\Theta_{p-1}$ in $\Theta_{p-1}\vee \Theta_{p-1}$. 
\smallbreak\noindent 
c) Neither $E_R$ nor any of the $E_{s,p-1-s}$ with $1\leq s\leq \frac{p-1}{2}$ 
is conjugate to a subgroup of $E_2$. 
\end{proposition}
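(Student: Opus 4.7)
The plan is to combine three ingredients: Culler's and Zimmermann's realization theorem, which lets us represent any finite $G\subset Out(F_n)$ as a subgroup of $Aut(\Gamma)$ for some $G$-reduced graph $\Gamma$ of rank $n$; Krstic's theorem, which says that two $G$-reduced graphs yield the same conjugacy class of $G$ if and only if they are joined by a sequence of $G$-equivariant Nielsen transformations; and the classification in Proposition \ref{reduced graphs} of reduced $\Z/p$-graphs of rank $2(p-1)$.

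For part (a), I would read off from Proposition \ref{reduced graphs} the candidate conjugacy classes: $R_n$ gives $E_R$, the graphs $\Theta_{p-1}^{s,p-1-s}$ give $E_{s,p-1-s}$ for $0\leq s\leq \frac{p-1}{2}$, and $\Theta_{p-1}\vee\Theta_{p-1}$ (with diagonal action) gives $\Delta(E_2)$. For rank $2$ subgroups, a parallel argument shows that the only reduced $(\Z/p)^2$-graph of rank $2(p-1)$ is $\Theta_{p-1}\vee\Theta_{p-1}$ with the full product action, yielding $E_2$, while the $p$-rank bound $[2(p-1)/(p-1)]=2$ rules out rank $\geq 3$. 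Since equivariant Nielsen transformations preserve the vertex set with its $G$-action, the number of $G$-fixed vertices ($1$, $2$, and $3$ respectively) already distinguishes the three types. To further separate the $E_{s,p-1-s}$ for different $s$, I would analyse the admissible Nielsen transformations on $\Theta_{p-1}^{s,p-1-s}$ and verify rigidity: the choice of $e_2$ among the theta-edges is forbidden by $e_2\notin Ge_1\cup G\sigma(e_1)$ since these lie in a single $\Z/p$-orbit; taking $e_2$ to be one of the fixed loops gives $t(\sigma(e_2))=t(e_2)$, so the transformation leaves the graph unchanged; and letting $e_1$ be a loop and $e_2$ a theta-edge fails the stabilizer condition. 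Hence no non-trivial transformation is admissible, so the classes $E_{s,p-1-s}$ are pairwise non-conjugate.

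For part (b), I would exhibit the conjugation directly: the subgroup $\{e\}\times\Z/p\subset E_2$ acts trivially on the left-hand $\Theta_{p-1}$ inside $\Theta_{p-1}\vee\Theta_{p-1}$, so any single edge of that copy is an invariant forest, and collapsing it yields $\Theta_{p-1}^{0,p-1}$ with induced $\Z/p$-action defining, up to marking, the class $E_{0,p-1}$. For part (c), every order-$p$ subgroup of $E_2$ is either a factor line, hence conjugate to $E_{0,p-1}$ by (b), or a non-factor line $\{(a,ka):a\in \Z/p\}$ with $k\in\FF_p^\times$; in the latter case the action is non-trivial on both copies of $\Theta_{p-1}$, so no edge is fixed and every $G$-orbit of an edge forms a cycle rather than a tree. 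Thus $\Theta_{p-1}\vee\Theta_{p-1}$ remains reduced and such a subgroup is conjugate to $\Delta(E_2)$. So only $E_{0,p-1}$ and $\Delta(E_2)$ appear inside $E_2$ up to conjugacy, and by (a) neither $E_R$ nor $E_{s,p-1-s}$ with $s\geq 1$ is subconjugate to $E_2$.

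The main obstacle is the Nielsen-rigidity case analysis for $\Theta_{p-1}^{s,p-1-s}$, and to a lesser extent for $\Theta_{p-1}\vee\Theta_{p-1}$: the three hypotheses in the definition of a Nielsen transformation are restrictive, but the check across the types of edges (theta-edge versus fixed loop) and pairs $(e_1,e_2)$ must be carried out with care. Once this rigidity is established, the remaining steps follow formally from Proposition \ref{reduced graphs} and the explicit collapse exhibited in part (b).
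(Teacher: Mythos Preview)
Your proposal is correct and follows essentially the same approach as the paper: Culler--Zimmermann realization, Proposition~\ref{reduced graphs} for the list of reduced $\Z/p$-graphs, and Krsti\'c's theorem on equivariant Nielsen transformations to separate conjugacy classes, together with the explicit collapse for part (b) and the analysis of order-$p$ lines in $E_2$ for part (c). The only minor difference is that where the paper simply asserts by ``inspection'' that every equivariant Nielsen transformation on each of $R_n$, $\Theta_{p-1}^{s,t}$, and $\Theta_{p-1}\vee\Theta_{p-1}$ returns an isomorphic graph, you instead invoke the vertex count (preserved by Nielsen transformations) to separate the three types and then carry out the explicit Nielsen rigidity check only for $\Theta_{p-1}^{s,t}$ to distinguish different values of $s$; this is a mild streamlining of the same argument.
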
   

\begin{proof}  Proposition \ref{reduced graphs} and the realization 
result of Culler and Zimmermann show that every elementary abelian 
$p$-subgroup of $Out(F_{2(p-1)})$ is conjugate to one in the given list. 
(Here we have implicitly used that the action of the group 
$Aut(\Theta_{p-1}\vee \Theta_{p-1})$ on its subgroups of 
order $p$ has just two orbits, that of $\Delta(E_2)$ and that of 
the subgroup fixing the right hand $\Theta_{p-1}$. The latter one can be 
omitted from our list because the corresponding graph is not reduced.)

Furthermore, if $G$ is a finite subgroup of $Out(F_{2(p-1)})$ which is  
realized by reduced $G$-graphs $\Gamma_1$ and $\Gamma_2$ then 
by Theorem 2 of \cite{K} there is a sequence of $G$-equivariant 
Nielsen transformations from $\Gamma_1$ to a graph which is $G$-equivariantly 
isomorphic to $\Gamma_2$.  
\smallbreak\noindent 
a) Inspection shows that any $E_R$- (resp. $\Delta(E_2)$- resp. $E_{s,p-1-s}$-) 
equivariant Nielsen transformation starting in $R_n$ (resp. 
$\Theta_{p-1}\vee \Theta_{p-1}$ resp. $\Theta_{p-1}^{s,t}$)  
ends up in a graph which is isomorphic to the initial graph. 
This shows, in particular, that none of subgroups $E_R$, $E_{s,p-1-s}$ and 
$\Delta(E_2)$ can be conjugate.   
\smallbreak\noindent 
b) The only subgroups of $E_2$ for which the graph 
$\Theta_{p-1}\vee \Theta_{p-1}$ 
is not reduced are those which act nontrivially only on one of the 
$\Theta_{p-1}$'s in $\Theta_{p-1}\vee \Theta_{p-1}$. By collapsing one of the 
fixed edges in the other $\Theta_{p-1}$ we pass to $\Theta_{p-1}^{0,p-1}$ 
and this implies that $E_{0,p-1}$ is conjugate to one of these  
subgroups of $E_2$. 
\smallbreak\noindent 
c) For all other subgroups of $E_2$ the graph $\Theta_{p-1}\vee \Theta_{p-1}$ 
is reduced and by using Theorem 2 of \cite{K} once more we see 
that neither $E_R$ 
nor $E_{s,p-1-s}$ for $1\leq s\leq \frac{p-1}{2}$ is conjugate to a subgroup of 
$E_2$. $\square$ 
\end{proof}

\section{Normalizers of elementary abelian p-subgroups}

Equivariant Nielsen transformations of graphs will also be used in the analysis 
of the normalizers of the elementary abelian subgroups of $Out(F_n)$. 
We will thus start this section by recalling more results of \cite{K}. 
\bigbreak 

Given a reduced $G$-graph $\Gamma$ of rank $n$ 
Krstic establishes in Corollary 2 
together with Proposition 2 an exact sequence of groups 
$$
1\to Inn_G(\Pi(\Gamma))\to Aut_G(\Pi(\Gamma))\to C_{Out(F_n)}(G)\to 1 \ . 
$$
Here $Aut_G(\Pi(\Gamma))$ resp. $Inn_G(\Pi(\Gamma))$ denote the $G$-equivariant
automorphisms resp. inner automorphisms of the fundamental groupoid of 
$\Gamma$. We recall that an endomorphism $J$ of a fundamental groupoid 
$\Pi$ is called inner 
if there is a collection of paths $\l_v, v\in V$, such that $t(\sigma(\l_v))=v$ 
for every $v\in V$ and $J(\a)=\sigma(\l_{t(\sigma(\a))})\a\l_{t\a}$ 
for every path $\a\in \Pi$. For a general $G$-graph an inner endomorphism 
need not be an automorphism. However, if $\Gamma$ is $G$-reduced, 
then any inner endomorphism is an isomorphism (cf. Proposition 3 of \cite{K}). 

Furthermore, the analysis of $Aut_G(\Pi(\Gamma))$
is made possible by Proposition 4 in Krstic \cite{K} which says 
that each $G$-equivariant automorphisms of $\Pi(\Gamma)$ 
can be written as a composition of $G$-equivariant Nielsen isomorphisms 
followed by a $G$-equivariant graph isomorphism. We will use this in order 
to determine the normalizers of all elementary abelian 
$p$-subgroups of $OutF_{2(p-1)}$, $p>3$, except that of $\Delta(E_2)$.  
Our discussion is very close to that of section 5 in \cite{J} where the same 
ideas are used to determine normalizers of elementary abelian subgroups 
of $Aut(F_{2(p-1)})$. Therefore our discussion will be fairly brief 
and the reader who wants to see more details may want 
to have a look at \cite{J}.

\subsection{$\Gamma=R_{2(p-1)}$}

\noindent 
We begin by constructing homomorphisms (cf. \cite{J})  
\begin{eqnarray} 
F_{p-2}\times F_{p-2}\to Aut_{E_R}(\Pi(R_{2(p-1)})),& \ \ (v,w) &\mapsto 
(y_i\mapsto y_i, x_i\mapsto v^{-1}x_iw)\nonumber \\
Aut(F_{p-2})\to Aut_{E_R}(\Pi(R_{2(p-1)})), &\ \ \alpha &\mapsto 
(y_i\mapsto \alpha(y_i), x_i\mapsto x_i)\nonumber \\
\Z/p\to Aut_{E_R}(\Pi(R_{2(p-1)})), & \ \ \sigma & \mapsto 
(y_i\mapsto y_i, x_i\mapsto x_{i+1})\nonumber \\
\Z/2\to Aut_{E_R}(\Pi R_{2(p-1)}),& \ \ \tau & \mapsto 
( y_i\mapsto y_i, x_i\mapsto x_i^{-1})\ . \nonumber 
\end{eqnarray}
Here the $x_i$ are the edges of $R_n$ which get cyclically moved 
by $\Z/p\cong E_R$, the $y_i$ are the fixed edges of $R_n$, $v$ and $w$ 
are words in the $y_i$ and their inverses, and $\sigma$ is 
a suitable generator of $E_R$.  
These maps determine a homomorphism 
$$
\psi_R: \Z/p\times ((F_{p-2}\times F_{p-2})\rtimes (Aut(F_{p-2})\times \Z/2))
\to C_{Aut(F_{2(p-1)})}(E_R)
$$ 
in which the action of $Aut(F_{p-2})$ on $F_{p-2}\times F_{p-2}$ is the 
canonical diagonal action, while
$\tau$ acts via $\tau(v,w)\tau^{-1}=(w,v)$. This homomorphism 
is surjective by Proposition 4 of \cite{K} and arguing with reduced words in 
free groups shows that it is also injective.  
 
The elements $(v^{-1},v^{-1},c_v,1)$ (where $c_v$ denotes conjugation 
$y_i\mapsto vy_iv^{-1}$) of the semidirect product 
correspond to the inner automorphisms $z\mapsto vzv^{-1}$. 
Furthermore, we have the following identity 
$$
\tau((v,1,\alpha,1)\tau^{-1}=(1,v,\alpha,1)
=(v,v,c_{v^{-1}},1)(v^{-1},1,c_v\alpha,1)
$$ 
in $(F_{p-2}\times F_{p-2})\rtimes (Aut(F_{p-2})\times \Z/2)$.  
This implies that there is an induced isomorphism
$$
\varphi_R:\Z/p\times (F_{p-2}\rtimes Aut(F_{p-2}))\rtimes\Z/2\Z
\to C_{Out(F_{2(p-1)})}(E_R)\ . 
$$
In this case the action of $\Z/2$ on $F_{p-2}\rtimes Aut(F_{p-2})$ 
is given by $\tau(x,\alpha)=(x^{-1},c_x\alpha)$. 
So we have already proved the first part of the following result.

\begin{proposition}\label{rose-normalizer} a) There is an isomorphism  
$$
\varphi_R: \Z/p\times (F_{p-2}\rtimes Aut(F_{p-2}))\rtimes\Z/2\Z
\to C_{Out(F_{2(p-1)})}(E_R) \ . 
$$ 
b) 
$\varphi_R$ extends to an isomorphism 
$$
\widetilde{\varphi_R}: N_{\Sigma}(\Z/p)\times (F_{p-2}\rtimes Aut(F_{p-2}))
\rtimes\Z/2\Z \to N_{Out(F_{2(p-1)})}(E_R)\ . 
$$
\end{proposition}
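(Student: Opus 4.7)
Part (a) is already established in the text preceding the statement, so I focus on part (b). My plan is to enlarge $\varphi_R$ by letting $N_{\Sigma_p}(\Z/p)$ act on $R_{2(p-1)}$ via graph automorphisms that permute the ``moving'' edges $x_0,\ldots,x_{p-1}$ and fix each $y_j$. Explicitly, any $\pi\in N_{\Sigma_p}(\Z/p)\subseteq\Sigma_p$ determines an automorphism of $F_{2(p-1)}$ by $x_i\mapsto x_{\pi(i)}$, $y_j\mapsto y_j$; since $\pi\sigma\pi^{-1}=\sigma^{k(\pi)}$ for some $k(\pi)\in(\Z/p)^{*}$, this element lies in $N_{Aut(F_{2(p-1)})}(E_R)$, and its image in $Out(F_{2(p-1)})$ lies in $N_{Out(F_{2(p-1)})}(E_R)$. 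Because $\pi$ fixes the $y_j$ pointwise and commutes on indices with $x_i\mapsto x_i^{-1}$, it commutes in $Aut(F_{2(p-1)})$ with every element in the image of $\psi_R$; combining with $\varphi_R$ therefore yields a homomorphism $\widetilde{\varphi_R}$ as in the statement, whose restriction along $\Z/p\hookrightarrow N_{\Sigma_p}(\Z/p)$ recovers the $\Z/p$-factor of $\varphi_R$.

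Bijectivity follows from a five-lemma argument. I would consider the commutative diagram whose top row is the evident split short exact sequence
\[
1\to \Z/p\times H\to N_{\Sigma_p}(\Z/p)\times H\to (\Z/p)^{*}\to 1,
\]
where $H=(F_{p-2}\rtimes Aut(F_{p-2}))\rtimes\Z/2$, and whose bottom row is the canonical
\[
1\to C_{Out(F_{2(p-1)})}(E_R)\to N_{Out(F_{2(p-1)})}(E_R)\to W\to 1,
\]
with $W\subseteq Aut(E_R)\cong(\Z/p)^{*}$ the Weyl group. The three vertical arrows are $\varphi_R$, $\widetilde{\varphi_R}$, and the map induced on quotients, $\pi\,\mathrm{mod}\,\Z/p\mapsto k(\pi)$. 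Part (a) handles the left vertical. The right vertical is injective because distinct $k\in(\Z/p)^{*}$ act distinctly on $E_R$, and surjective because the explicit permutation $x_i\mapsto x_{ki}$ (indices mod $p$) realizes the automorphism $\sigma\mapsto\sigma^{k}$ of $E_R$, forcing $W=(\Z/p)^{*}$. The five lemma then shows $\widetilde{\varphi_R}$ is an isomorphism.

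The main obstacle is precisely this surjectivity of the Weyl map, i.e.\ exhibiting, for every element of $Aut(E_R)$, an element of $Out(F_{2(p-1)})$ that realizes it by conjugation; it is handled by the explicit graph-automorphism construction above. The remaining commutation checks between the $N_{\Sigma_p}(\Z/p)$-action and the ingredients of $\varphi_R$ are routine, since each of those ingredients is either supported on the $y_j$-subrose (the left/right multiplications by $y_j$-words and the $Aut(F_{p-2})$-action) or acts on the $x_i$ only by inversion, and both kinds of operation commute with index permutations $\pi$ on $\{x_0,\ldots,x_{p-1}\}$.
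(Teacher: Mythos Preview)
Your argument is correct in substance and follows essentially the same route as the paper: both use the exact sequence $1\to C\to N\to Aut(E_R)\to 1$, realize the surjection onto $Aut(E_R)$ via graph automorphisms permuting the $x_i$, and then conclude by comparing with the centralizer computed in~(a). The paper phrases this as extending $\psi_R$ to an isomorphism $\widetilde{\psi_R}$ at the $Aut(F_{2(p-1)})$ level and then passing to the quotient, while you work directly in $Out(F_{2(p-1)})$ and invoke the five lemma; these are equivalent.

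One small correction: your sentence ``it commutes in $Aut(F_{2(p-1)})$ with every element in the image of $\psi_R$'' is not literally true, since $\sigma$ lies in the image of $\psi_R$ and a general $\pi\in N_{\Sigma_p}(\Z/p)$ only \emph{normalizes} $\langle\sigma\rangle$. What you actually need (and what your justification ``$\pi$ fixes the $y_j$ pointwise and commutes on indices with $x_i\mapsto x_i^{-1}$'' establishes) is that $\pi$ commutes with the image of the factor $(F_{p-2}\times F_{p-2})\rtimes(Aut(F_{p-2})\times\Z/2)$; the $\Z/p$ factor is absorbed into $N_{\Sigma_p}(\Z/p)$, exactly as your five-lemma diagram reflects. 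With that wording fixed, the argument is complete.
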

\bigskip
 
\begin{proof} Part (a) has already been proved. For (b) we note that 
there is an exact sequence 
$$
1\to C_{Out(F_2(p-1))}(E_R)\to N_{Out(F_2(p-1)}(E_R)\to Aut(E_R)\to 1 \ .
$$ 
The normalizer does indeed surject to $Aut(E_R)$ because graph automorphisms 
induce a spliting of this sequence. In fact, the homomorphism  
$\psi_R$ can be extended to a homomorphism 
$$
\widetilde{\psi_R}: N_{\Sigma}(\Z/p)\times (F_{p-2}\times F_{p-2})
\rtimes (Aut(F_{p-2})\times \Z/2)\to N_{Aut(F_{2(p-1)})}(E_R)
$$ 
which can easily be seen to be an isomorphism and which induces 
the isomorphism $\widetilde{\varphi_R}$. $\square$ 
\end{proof}

\subsection{$\Gamma=\Theta_{p-1}^{s,p-1-s}$}

\noindent 
To simplify notation we let $t=p-1-s$. 
Again we begin by constructing homomorphisms (cf. \cite{J})  
\begin{eqnarray} 
F_s\times F_t\to Aut_{E_{s,t}}(\Pi(\Theta_{p-1}^{s,t})),
& \ \ (v,w) &\mapsto 
(a_i\mapsto a_i, b_i\mapsto v^{-1}b_iw, c_i\mapsto c_i)\nonumber \\
Aut(F_{s})\times Aut(F_t)\to Aut_{E_{s,t}}(\Pi(\Theta_{p-1}^{s,t})), 
&\ \ (\alpha,\beta) &\mapsto 
(a_i\mapsto \alpha(a_i), b_i\mapsto b_i, c_i\mapsto \beta(c_i))\nonumber \\
\Z/p\to Aut_{E_{s,t}}(\Pi(\Theta_{p-1}^{s,t})), & \ \ \sigma & \mapsto 
(a_i\mapsto a_i, b_i\mapsto b_{i+1},c_i\mapsto c_i)\ .\nonumber 
\end{eqnarray}
Here the $a_i$ denote the fixed edges attached to the left hand vertex of 
$\Theta_{p-1}$, the $b_i$ are the edges of $\Theta_{p-1}$ which get cyclically 
moved by $E_{s,p-s-1}$ and have the right hand vertex as their terminal point, 
the $c_i$ are the fixed edges attached to 
the right hand vertex of $\Theta_{p-1}$, $v$ resp. $w$ are words in the 
$a_i$ resp. $c_i$ and their inverses, and $\sigma$ is a suitable generator of 
$\Z/p\cong E_{s,t}$.

These homomorphisms determine a homomorphism 
$$
\psi_{s,t}: \Z/p\times (F_s\rtimes Aut(F_s))
\times (F_{t}\rtimes Aut(F_{t}))\to 
Aut_{E_{s,t}}^*(\Pi(\Theta_{p-1}^{s,t})) 
$$
where $Aut_{E_{s,t}}^*(\Theta_{p-1}^{s,t})$ denotes the equivariant 
automorphisms of $\Theta_{p-1}^{s,t}$ which fix both vertices. 
In fact, this homomorphism is surjective by Proposition 4 of \cite{K}, 
and arguing with reduced words in free groups shows that it is also  
injective.

If $s\neq t=p-1-s$ we find 
$Aut_{E_{s,t}}(\Pi(\Theta_{p-1}^{s,t}))=Aut_{E_{s,t}}^*(\Pi(\Theta_{p-1}^{s,t}))$, 
and if $s=t$ the group $Aut_{E_{s,t}}^*(\Pi(\Theta_{p-1}^{s,t}))$ 
is of index $2$ in $Aut_{E_{s,t}}^*(\Pi(\Theta_{p-1}^{s,t}))$, 
and there is an obvious extension of 
$\psi_{s,s}$ to an isomorphism 
$$
\psi'_{s,s}: \Z/p\times 
\big((F_s\rtimes Aut(F_s)\big)\wr \Z/2\to 
Aut_{E_{s,s}}(\Pi(\Theta_{p-1}^{s,s})) \ . 
$$
In order to get $C_{Out_{F_{s,t}}(E_{s,p-1-s}})$ we 
need to quotient out the group of equivariant inner automorphisms of 
$\Pi(\Theta_{p-1}^{s,t})$. Any inner automorphism 
is given by two paths $\l_1$ resp. $\l_2$ terminating in the two edges 
$v_1$ resp. $v_2$ of $\Theta_{p-1}^{s,t}$. Equivariance requires these 
paths to be fixed under the action of $E_{s,t}$. Therefore 
$\l_1$ can be identified with a word $v$ in the $a_i$ and their inverses and 
$\l_2$ can be identified with a word $w$ in the $c_i$ and their inverses, and 
it follows that the inner automorphism determined by $\l_1$ and 
$\l_2$ corresponds to the tuple $(1,v^{-1},c_v,w^{-1},c_w)$ in 
$\Z/p\times (F_s\rtimes Aut(F_s))\times (F_{p-1-s}\rtimes Aut(F_{p-1-s}))$. 
Passing to the quotient by the inner automorphism gives the first half of the 
following result. The second half is proved as before in the case of the rose. 

\begin{proposition}\label{theta-normalizers} 
a) For $s\neq \frac{p-1}{2}$ the isomorphism 
$\psi_{s,p-1-s}$ induces an isomorphism  
$$
\varphi_{s,p-1-s}: 
\Z/p\times Aut(F_{s})\times Aut(F_{p-1-s})\to 
C_{Out(F_{2(p-1)})}(E_{s,p-1-s}) \ . 
$$ 
while for $s=\frac{p-1}{2}$ the isomorphism $\psi'_{s,s}$ 
induces an isomorphism 
$$
\varphi_{s,s}': 
\Z/p\times (Aut(F_{s})\wr \Z/2)\to C_{Out(F_{2(p-1)})}(E_{s,s}) \ . 
$$ 

b) For $s\neq \frac{p-1}{2}$ $\varphi_{s,p-1-s}$ extends 
to an isomorphism 
$$
\widetilde{\varphi}_{s,p-1-s}: N_{\Sigma}(\Z/p)\times 
Aut(F_s)\times Aut(F_{p-1-s})
\to N_{Out(F_{2(p-1)})}(E_{s,p-1-s})\ . 
$$ 
while for $s=\frac{p-1}{2}$ the isomorphism $\varphi_{s,s}$ 
extends to an isomorphism 
$$
\widetilde{\varphi}_{s,s}': 
N_{\Sigma_p}(\Z/p)\times (Aut(F_{s})\wr \Z/2)\to C_{Out(F_{2(p-1)})}(E_{s,s}) \ . 
\square
$$ 
\end{proposition}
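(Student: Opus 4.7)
The plan is to mirror the treatment of the rose case just given, applying Krstic's exact sequence
\[
1 \to Inn_{E_{s,t}}(\Pi(\Theta_{p-1}^{s,t})) \to Aut_{E_{s,t}}(\Pi(\Theta_{p-1}^{s,t})) \to C_{Out(F_{2(p-1)})}(E_{s,t}) \to 1
\]
(with $t = p-1-s$) to the reduced $E_{s,t}$-graph $\Theta_{p-1}^{s,t}$, using the homomorphism $\psi_{s,t}$ that was constructed above. The first step is to show that $\psi_{s,t}$ is a bijection onto the subgroup $Aut^{*}_{E_{s,t}}(\Pi(\Theta_{p-1}^{s,t}))$ of vertex-preserving equivariant automorphisms. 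Surjectivity is immediate from Proposition 4 of \cite{K}: any such automorphism decomposes as a product of equivariant Nielsen isomorphisms followed by a vertex-preserving equivariant graph automorphism, and inspection shows that the available Nielsen moves at the two vertices are precisely those encoded by the $F_s \rtimes Aut(F_s)$ and $F_t \rtimes Aut(F_t)$ factors, while the vertex-preserving graph automorphisms reduce to the cyclic rotation of the $b_i$'s accounting for $\Z/p$. Injectivity is a direct reduced-word argument applied in the free groups generated by the $a_i$'s and $c_i$'s.

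Next I would account for the vertex-swap. When $s \neq (p-1)/2$ the two vertices of $\Theta_{p-1}^{s,t}$ have different valencies, so no equivariant graph automorphism interchanges them and $Aut^{*}_{E_{s,t}} = Aut_{E_{s,t}}$. When $s = (p-1)/2$ there is an additional $\Z/2$ realized by the graph involution swapping the two vertices (and matching the $a_i$'s with the $c_i$'s), so that $\psi_{s,t}$ extends to the isomorphism $\psi'_{s,s}$ onto $\Z/p \times ((F_s \rtimes Aut(F_s)) \wr \Z/2)$.

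The third step is to quotient by the group of $E_{s,t}$-equivariant inner automorphisms. Such an inner automorphism is determined by a pair of paths $\lambda_1, \lambda_2$ terminating at the two vertices; $E_{s,t}$-equivariance forces each $\lambda_i$ to be pointwise fixed, hence a word in the $a_i^{\pm 1}$ respectively the $c_i^{\pm 1}$. Tracing through the formulas defining $\psi_{s,t}$ shows that these inner automorphisms correspond exactly to the elements $(1, v^{-1}, c_v, w^{-1}, c_w)$, and passing to the quotient collapses each $F_{\bullet} \rtimes Aut(F_{\bullet})$ onto $Aut(F_{\bullet})$; this gives $\varphi_{s,p-1-s}$ in the case $s \neq (p-1)/2$, and similarly $\varphi'_{s,s}$ in the symmetric case after observing that the vertex-swap $\Z/2$ permutes the two inner-automorphism subgroups.

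Finally, for part (b) I would plug the centralizer into the exact sequence
\[
1 \to C_{Out(F_{2(p-1)})}(E_{s,t}) \to N_{Out(F_{2(p-1)})}(E_{s,t}) \to Aut(E_{s,t}) \to 1
\]
and produce an explicit splitting: a generator of $N_{\Sigma_p}(\Z/p) \subset Aut(\Z/p) = Aut(E_{s,t})$ is realized by the graph automorphism of $\Theta_{p-1}^{s,t}$ that relabels the edges $b_i$ by the corresponding primitive power, and this graph automorphism commutes (on the nose) with the $Aut(F_s) \times Aut(F_{p-1-s})$, respectively $Aut(F_s) \wr \Z/2$, factors already identified inside the centralizer. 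The main point to check — and the only step I expect to require real care — is the case $s = (p-1)/2$, where one has to verify that the vertex-swap $\Z/2$ inside the wreath product commutes with the $N_{\Sigma_p}(\Z/p)$-action on $E_{s,s}$, so that the resulting extension is genuinely a direct product of the two factors as stated rather than a non-trivial semidirect product.
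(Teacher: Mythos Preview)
Your proposal is correct and follows essentially the same route as the paper: you invoke Krstic's exact sequence, establish that $\psi_{s,t}$ is an isomorphism onto $Aut^{*}_{E_{s,t}}(\Pi(\Theta_{p-1}^{s,t}))$ via Proposition~4 of \cite{K} plus a reduced-word argument, identify the equivariant inner automorphisms with the elements $(1,v^{-1},c_v,w^{-1},c_w)$ exactly as the paper does, and then handle part (b) via the centralizer-normalizer exact sequence split by graph automorphisms --- precisely what the paper means by ``proved as before in the case of the rose.'' One small notational slip: $N_{\Sigma_p}(\Z/p)$ has order $p(p-1)$ and is not literally contained in $Aut(\Z/p)\cong\Z/(p-1)$; what you want is that the quotient $N_{\Sigma_p}(\Z/p)/(\Z/p)$ maps isomorphically to $Aut(E_{s,t})$, with the $\Z/p$ already sitting inside the centralizer.
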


\subsection{$\Gamma=\Theta_{p-1}\vee\Theta_{p-1}$}
 
In this case we have to look at $Aut_{E_2}(\Pi(\Theta_{p-1}\vee\Theta_{p-1}))$.
There are no non-trivial Nielsen transformations in this case and therefore the
centralizer resp. normalizer is given completely 
in terms of graph automorphisms 
and therefore has the following form (cf. \cite{J}).

\begin{proposition}\label{E2-normalizer}
\begin{eqnarray} 
C_{Out(F_{2(p-1)})}(E_2)&\cong &\Z/p\times\Z/p \nonumber \\
N_{Out(F_{2(p-1)})}(E_2)&\cong & N_{\Sigma}(\Z/p)
\wr \Z/2 \ \ \ \ \ \ \square\nonumber 
\end{eqnarray}
\end{proposition}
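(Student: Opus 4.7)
The strategy is to apply the framework of equivariant graph automorphisms and inner automorphisms, as in the two preceding subsections, to the reduced $E_2$-graph $\Gamma=\Theta_{p-1}\vee\Theta_{p-1}$, using that no non-trivial $E_2$-equivariant Nielsen transformations exist so that $Aut_{E_2}(\Pi(\Gamma))$ reduces to $E_2$-equivariant graph isomorphisms, and then to divide by the $E_2$-equivariant inner automorphisms.

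First I would verify the absence of non-trivial $E_2$-equivariant Nielsen transformations. Such a transformation requires edges $e_1,e_2$ with a common terminal vertex, with $e_2$ outside the $E_2$-orbit of $e_1$ and of $\sigma(e_1)$, and with the stabilizer of $e_1$ contained in that of $e_2$. The three vertices of $\Gamma$ are the wedge vertex $w$ and the two outer vertices $u,v$. If $e_1,e_2$ both terminate at $u$ (or both at $v$) they are edges of the same $\Theta_{p-1}$ lying in a single $E_2$-orbit of size $p$, contradicting the orbit condition. If both terminate at $w$ and lie on the same $\Theta_{p-1}$, we are again in a single orbit; if they lie on different sides, their stabilizers are the two distinct $\Z/p$-factors of $E_2$, neither of which is contained in the other. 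So no admissible pair exists.

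By Proposition 4 of \cite{K}, every $E_2$-equivariant automorphism of $\Pi(\Gamma)$ is therefore an equivariant graph automorphism. The graph automorphism group $Aut(\Gamma)$ is $\Sigma_p\wr\Z/2$, each $\Sigma_p$ permuting the $p$ edges of one $\Theta_{p-1}$ and the $\Z/2$ swapping the two copies (the wedge vertex, being the unique vertex of valency $2p$, is fixed by every graph automorphism). A direct calculation shows that a triple $(\pi_1,\pi_2;\epsilon)$ centralizes $E_2$ only when $\epsilon$ is trivial and each $\pi_i$ commutes with the generating cyclic $p$-shift, which forces $\pi_i\in\Z/p$; so the centralizing equivariant graph automorphisms coincide with $E_2$ itself. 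More generally, the normalizing graph automorphisms are those with $\pi_i\in N_{\Sigma_p}(\Z/p)$ and $\epsilon$ arbitrary, giving the subgroup $N_{\Sigma_p}(\Z/p)\wr\Z/2$.

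Finally I would show $Inn_{E_2}(\Pi(\Gamma))=1$. Since all three vertices are $E_2$-fixed, an equivariant inner automorphism is determined by a family of $E_2$-fixed paths $\{\lambda_v\}$. The Bass--Serre decomposition of the $\Z/p_1$-action on the left $\Theta_{p-1}$ (quotient an interval with two $\Z/p$ vertex groups and trivial edge group) produces an extension $1\to\pi_1(\Theta_{p-1})\to\Z/p\ast\Z/p\to\Z/p\to 1$; by normal form in a free product the centralizer of a preimage of the generator is the cyclic factor $\Z/p$, so its intersection with the torsion-free kernel $\pi_1(\Theta_{p-1})$ is trivial. Iterating the argument over the two $\Theta_{p-1}$-factors yields $F_{2(p-1)}^{E_2}=1$, and the same normal-form argument shows that there are no $E_2$-fixed paths between distinct vertices either. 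Hence each $\lambda_v$ is trivial and $Inn_{E_2}(\Pi(\Gamma))$ vanishes. Krstic's exact sequence then gives $C_{Out(F_{2(p-1)})}(E_2)=\Z/p\times\Z/p$, and the analogous normalizer analysis yields $N_{Out(F_{2(p-1)})}(E_2)=N_{\Sigma_p}(\Z/p)\wr\Z/2$. The main obstacle is the Bass--Serre computation of the fixed subgroup; once this and the Nielsen-free property are in place, the rest follows by inspection.
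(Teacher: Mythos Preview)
Your proposal is correct and follows essentially the same route as the paper: observe that there are no non-trivial $E_2$-equivariant Nielsen transformations on $\Theta_{p-1}\vee\Theta_{p-1}$, so by Krstic's Proposition~4 every element of $Aut_{E_2}(\Pi(\Gamma))$ is a graph automorphism, and then read off the centralizer and normalizer from $Aut(\Gamma)\cong\Sigma_p\wr\Z/2$ (the paper simply asserts this and refers to \cite{J}). Your Bass--Serre computation for $Inn_{E_2}(\Pi(\Gamma))=1$ is valid but heavier than necessary: since no edge of $\Gamma$ is fixed by all of $E_2$ and reduced words in a free groupoid are unique, any $E_2$-fixed path is automatically trivial.
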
 
\bigbreak 

It remains to determine the structure of the normalizer of 
$\Delta(E_2)$, which we will abbreviate as in the introduction 
by $N_{\Delta}$ in order to simplify notation. 
In this case we prefer to use information 
of the fixed point space $(K_{2(p-1)})_s^{\Delta(E_2)}$ rather 
than working with Krstic's method.  We immediately note that this 
fixed point space is equipped with an action of $N_{\Delta}$ and 
it contains the graph $\Theta_{p-1}\vee \Theta_{p-1}$ as well as 
the bipartite graph $K_{p,3}$ with markings which are compatible 
with respect to the collapse of the invariant tree $K_{p,1}$ inside $K_{p,3}$. 
In the following result we fix such a marking.

\begin{proposition}\label{Delta(E_2)} Let $p\geq 3$ be a prime. 
\smallbreak 
a) The fixed point space $(K_{2(p-1)})_s^{\Delta(E_2)}$ is a tree and 
the edge $e$ determined by the collapse of $K_{p,1}$ in $K_{p,3}$ 
is a fundamental domain for the action of $N_{\Delta}$ on it. 

\smallbreak
b)
There is an isomorphism 
\begin{eqnarray}
N_{\Delta}&\cong 
& Stab_{N_{\Delta}}(\Theta_{p-1}\vee\Theta_{p-1})
*_{Stab_{N_{\Delta}}(e)}Stab_{N_{\Delta}}(K_{p,3})\nonumber \\
&\cong &
((\Z/p\times\Z/p)\rtimes(Aut(\Z/p)\times\Z/2))
*_{N_{\Sigma_p}(\Z/p)\times \Z/2}{(N_{\Sigma_p}(\Z/p)\times \Sigma_3)}  
\nonumber 
\end{eqnarray}
where $Stab_{N_{\Delta}}(\Gamma)$ denotes the stabilizer of $\Gamma$ 
with respect to the action of $N_{\Delta}$ and 
the action of $Aut(\Z/p)$ on $\Z/p\times\Z/p$ 
is given by the canonical diagonal action and that by $\Z/2$ by permuting 
the factors. 
\end{proposition}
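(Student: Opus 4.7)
My plan is to apply Bass--Serre theory to the action of $N_\Delta$ on the fixed point set $X := (K_{2(p-1)})_s^{\Delta(E_2)}$. Part (a) reduces to showing that $X$ is a tree with fundamental domain the edge $e$; part (b) will then follow by identifying the three stabilizers in the resulting amalgamated product.

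For (a), the first step is to classify the vertices of $X$, namely the marked $\Delta(E_2)$-fixed graphs. Any such marked graph collapses equivariantly to a reduced $\Delta(E_2)$-graph, and by Proposition~\ref{reduced graphs} (together with the observation that a trivial $\Z/p$-action on one of the $\Theta_{p-1}$ factors would allow a further edge collapse) the only reduced possibility is $\Theta_{p-1}\vee\Theta_{p-1}$ with diagonal action. Non-reduced $\Delta(E_2)$-graphs arise by equivariantly blowing up a fixed vertex by a $\Z/p$-invariant tree. A case analysis using the valence $\geq 3$ condition and the ban on separating edges shows that blow-ups of the outer vertices $L,R$ introduce vertices of valence $\leq 2$, the ``fixed-edge'' blow-up of the central vertex $M$ produces a separating edge, and the only admissible blow-up replaces $M$ by a $\Z/p$-star $K_{p,1}$ whose leaves match the two orbits of edges at $M$ diagonally, yielding precisely $K_{p,3}$. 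Since any nontrivial $\Z/p$-orbit of edges in $K_{p,1}$ is already the full star, there are no intermediate $\Delta(E_2)$-fixed marked graphs, so $X$ is $1$-dimensional; contractibility of fixed point sets of finite subgroups acting on the spine (\cite{K},\cite{CV}) then forces $X$ to be a tree. Finally, verifying that $N_\Delta$ acts transitively on marked $\Delta(E_2)$-graphs of each isomorphism type and on edges of $X$, one concludes that the quotient $X/N_\Delta$ is the single edge $e$.

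For (b), Bass--Serre theory immediately yields $N_\Delta \cong Stab_{N_\Delta}(v_1) *_{Stab_{N_\Delta}(e)} Stab_{N_\Delta}(v_2)$, and it remains to compute the three stabilizers. The isotropy in $Out(F_n)$ of a marked graph $(\Gamma,\alpha)$ is $Aut(\Gamma)$, so $Stab_{N_\Delta}(\Gamma) = N_{Aut(\Gamma)}(\Delta(E_2))$. For $\Gamma = \Theta_{p-1}\vee\Theta_{p-1}$ we have $Aut(\Gamma) = \Sigma_p \wr \Z/2$, and a routine calculation identifies the normalizer of the diagonal as $(\Z/p \times \Z/p) \rtimes (Aut(\Z/p) \times \Z/2)$, with $Aut(\Z/p)$ lifted diagonally from $N_{\Sigma_p}(\Z/p)/\Z/p$ and $\Z/2$ coming from the wreath-product swap. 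For $\Gamma = K_{p,3}$ with $p>3$ the blocks have unequal size, so $Aut(\Gamma) = \Sigma_p \times \Sigma_3$ and the normalizer of the cyclic $\Z/p$ is $N_{\Sigma_p}(\Z/p) \times \Sigma_3$. The edge stabilizer consists of those automorphisms preserving the invariant star $K_{p,1}$, which forces fixing its root vertex in the $3$-block, giving $N_{\Sigma_p}(\Z/p) \times \Z/2$. I expect the main obstacle to be verifying the transitivity claim in part (a) --- that any two $\Delta(E_2)$-fixed markings of the same underlying graph differ by an element of $N_\Delta$ --- which rests on a conjugacy argument showing that all $\Z/p$-subgroups of $\Sigma_p \wr \Z/2$ acting non-trivially on both wreath factors are conjugate inside $\Sigma_p \wr \Z/2$.
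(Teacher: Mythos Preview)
Your approach is essentially the paper's: classify admissible $\Delta(E_2)$-graphs, invoke contractibility of the fixed-point set, and apply Bass--Serre theory. Two points deserve attention.

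First, your blow-up case analysis is incomplete. You rule out a single fixed edge at $M$ (separating edge) and exhibit the $K_{p,1}$ blow-up, but you do not explain why no \emph{larger} $\Z/p$-invariant tree can be inserted at $M$, or equivalently why nothing in the poset sits strictly above $K_{p,3}$. The paper handles this by a global valency count: a minimal $\Delta(E_2)$-graph collapsing to $K_{p,3}$ would have either $4p$ edges and $2p+3$ vertices (four free edge-orbits, two free and three fixed vertex-orbits) or $3p+1$ edges and $p+4$ vertices, and in each case summing the required valencies exceeds twice the number of edges. You should either reproduce this count or give an equivalent local argument classifying all admissible $\Z/p$-invariant trees at $M$.

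Second, the correct reference for contractibility of the fixed-point set in the spine is Krsti\'c--Vogtmann \cite{KV}, not \cite{K} or \cite{CV}. Your transitivity concern is legitimate and your proposed resolution (conjugacy of diagonal $\Z/p$'s in $\Sigma_p\wr\Z/2$, and of $p$-Sylows in $Aut(K_{p,3})$) is correct and routine; the paper leaves this step implicit as well.
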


\begin{proof} Part (b) is an immediate consequence of part (a). 
\smallbreak  
For (a) we note that $K_{2(p-1)}^{\Delta(E_2)}$ is contractible 
\cite{KV}. By the general theory of groups acting on trees 
it is therefore enough to show that this space is a 
one-dimensional complex and the edge $e$ is a fundamental domain for the 
action of $\Delta(E_2)$.

This follows immediately as soon as we have shown 
that up to isomorphism there is only one 
$\Delta(E_2)$-admissible graph containing a non-trivial 
$\Delta(E_2)$-invariant forest so that after collapsing this forest 
we get $\Theta_{p-1}\vee \Theta_{p-1}$ with the given action of $\Delta(E_2)$. 
In fact, if $\Gamma$ is a minimal such graph, then the 
invariant forest either consists of a non-trivial orbit of $p$ edges 
or of a single fixed edge. 

In the first case, $\Gamma$ has $3p$ edges and $p+3$ vertices, 
and because the quotient $\Gamma/\Delta(E_2)$ collapses to 
$(\Theta_{p-1}\vee \Theta_{p-1})/\Delta(E_2)$,  
there are $3$ nontrivial orbits of edges and $4$ orbits 
of vertices, $3$ of which are trivial and one with $p$ vertices. 
Because all edges are moved, the valency of each fixed vertex must be at 
least $p$ and because each of the moving vertices has valency at least $3$, 
we see that, in fact, the valency of the fixed vertices is exactly $p$ 
and that of the others is $3$. It is then easy to check that 
admissibility implies that $\Gamma$ must be $K_{p,3}$.  

In the second case $\Gamma$ would $2p+1$ edges in one trivial and 
two non-trivial orbits and $4$ trivial orbits of vertices. 
It is easy to see that such a $\Gamma$ cannot be admissible.   

Finally we claim that there cannnot be any admissible 
$\Delta(E_2)$-graph $\Gamma$ with a $\Delta(E_2)$-invariant forest 
which collapses to $K_{p,3}$. In fact, such a minimal graph would 
either have $4p$ edges and $2p+3$ vertices, or $3p+1$ vertices and 
$p+4$ vertices. Again $\Gamma/\Delta(E_2)$ would have to collapse to 
$K_{p,3}/\Delta(E_2)$. Therefore, in the first case 
we would have $4$ nontrivial orbits of edges, $2$ 
nontrivial orbits of vertices and three trivial orbits of vertices. 
The valency of each fixed vertex would be at least $p$ 
and that of the others at least $3$, so that the sum of the valencies would be 
at least $3p+6p=9p$ which is in contradiction to having only $4p$ edges. 
In the second case we would have $3$ nontrivial and one trivial orbit of 
edges and $1$ nontrivial orbit and $4$ trivial orbits of vertices. 
The valency of at least $3$ of the fixed vertices would have to be at 
least $p$ and then the total valency would be at least $3p+3+3p=6p+3$ 
which contradicts having only $3p+1$ edges.  \ \ \ $\square$
\end{proof}
    
\bigskip
Finally we will need the intersection of the normalizers of $\Delta(E_2)$ 
and of $E_2$ resp. of $E_{0,p-1}$ and of $E_2$.  This is now straightforward 
to deduce just from the structure of $N_{Out(F_{2(p-1)})}(E_2)$ 
given in Proposition \ref{E2-normalizer}.  With notation as in the introduction 
we get the following result. 

\begin{proposition}\label{intersections} There are isomorphisms 
\begin{eqnarray}
N_{0,p-1}\cap N_2 &\cong &  
N_{\Sigma_p}(\Z/p)\times N_{\Sigma_p}(\Z/p) \nonumber \\
N_{\Delta}\cap N_2 &\cong &  
(\Z/p\times \Z/p)\rtimes (Aut(\Z/p)\times \Z/2) \ . \nonumber 
\end{eqnarray}
\end{proposition}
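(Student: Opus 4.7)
The plan is to deduce both isomorphisms by analyzing how the subgroups $N_{0,p-1}$ and $N_\Delta$ meet $N_2$ inside the concrete model
\[
N_2 \cong N_{\Sigma_p}(\Z/p) \wr \Z/2 \cong \bigl((\Z/p \rtimes Aut(\Z/p))\times(\Z/p \rtimes Aut(\Z/p))\bigr)\rtimes \Z/2
\]
provided by Proposition~\ref{E2-normalizer}. First I would make explicit the short exact sequence
\[
1 \to E_2 \to N_2 \to Aut(\Z/p)\wr \Z/2 \to 1,
\]
noting that the quotient acts on $E_2 \cong \Z/p \times \Z/p$ by the standard wreath product action: a pair $(\alpha_1,\alpha_2)$ acts componentwise, and the top $\Z/2$ swaps the two coordinates. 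Both intersections will then be read off by asking which elements of $N_2$ induce automorphisms of $E_2$ preserving the relevant subgroup.

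For $N_{0,p-1}\cap N_2$, I would use Proposition~\ref{pro:3}(b) and the convention fixed in the introduction that $E_{0,p-1}$ is one of the two canonical $\Z/p$-factors of $E_2$. An element of $N_2$ lies in $N_{0,p-1}$ iff its image in $Aut(\Z/p)\wr \Z/2$ preserves this factor setwise. The swap and any element involving it send the first factor to the second, while elements of $N_{\Sigma_p}(\Z/p)\times N_{\Sigma_p}(\Z/p)$ (the base of the wreath product) preserve both factors individually. This immediately identifies $N_{0,p-1}\cap N_2$ with $N_{\Sigma_p}(\Z/p) \times N_{\Sigma_p}(\Z/p)$.

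For $N_\Delta \cap N_2$, the same strategy applies: an element of $N_2$ normalizes $\Delta(E_2) = \{(x,x) : x \in \Z/p\}$ iff its induced automorphism of $E_2$ preserves the diagonal. A pair $(\alpha_1,\alpha_2) \in Aut(\Z/p)^2$ preserves $\Delta$ precisely when $\alpha_1 = \alpha_2$, and the swap visibly stabilizes $\Delta$. Hence the subgroup of $Aut(\Z/p)\wr \Z/2$ stabilizing $\Delta$ is exactly the diagonally embedded $Aut(\Z/p) \times \Z/2$. Pulling back along $N_2 \to Aut(\Z/p)\wr \Z/2$ with kernel $E_2 = \Z/p\times\Z/p$ yields the semidirect product
\[
(\Z/p \times \Z/p)\rtimes (Aut(\Z/p)\times \Z/2)
\]
with the action described in the statement, since the extension splits inside $N_{\Sigma_p}(\Z/p)\wr \Z/2$.

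The only genuine obstacle, and the one I expect to need the most care, is bookkeeping: to compute \emph{intersections} (as opposed to conjugacy classes of subgroups), one must work with a single fixed copy of $E_2$ in $Out(F_{2(p-1)})$ and fixed embeddings of $E_{0,p-1}$ and $\Delta(E_2)$ into it. The markings chosen in Section~4 together with Proposition~\ref{pro:3}(b) provide exactly such a model, so it only remains to check compatibility of conventions before the above identifications go through.
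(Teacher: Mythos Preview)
Your proposal is correct and follows exactly the approach the paper indicates: the paper merely says the result is ``straightforward to deduce just from the structure of $N_{Out(F_{2(p-1)})}(E_2)$ given in Proposition~\ref{E2-normalizer}'', and you have spelled out precisely this deduction by computing which elements of $N_2\cong N_{\Sigma_p}(\Z/p)\wr\Z/2$ preserve the subgroup $E_{0,p-1}$ resp.\ $\Delta(E_2)$ under the conjugation action on $E_2$. Your bookkeeping caveat about fixed markings is appropriate but, as you note, already handled by the conventions set up earlier in the paper.
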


\section{Evaluation of the normalizer spectral sequence for $p>3$}

By section 4 the poset $\cal{A}$ of elementary abelian 
$p$-subgroups of $Out(F_{2(p-1)})$, $p>3$, consists of the orbits of $E_R$, 
of $E_{0,p-1-s}$ for $0\leq s\leq\frac{p-1}{2}$, 
of $\Delta(E_2)$, of $E_2$ and the orbits of the two edges 
formed by $\Delta(E_2)$ and $E_2$ resp. by $E_{0,p-1}$ and $E_2$. 
Therefore the quotient of $\mathcal{A}$ by the action of 
$Out(F_{2(p-1)})$ consists of singletons corresponding to the orbits of 
$E_R$ resp. of $E_{s,p-1-s}$ for $1\leq s\leq \frac{p-1}{2}$, and 
of one component of dimension $1$. This latter component 
has three vertices formed by the orbits of $E_{0,p-1}$, of 
$\Delta(E_2)$ and of $E_2$, and two edges formed by the orbits of the edge 
between $E_{0,p-1}$ and $E_2$ resp. by the edge between 
$\Delta(E_2)$ and $E_2$.  
We will denote the preimage of these components in $\mathcal{A}$ 
by $\mathcal{A}_R$ resp. by  $\mathcal{A}_{s,p-1-s}$ for 
$1\leq s\leq \frac{p-1}{2}$ 
resp. by  $\mathcal{A}_2$.   
\medbreak 

The contributions of $\mathcal{A}_R$ and of $\mathcal{A}_{s,p-1-s}$ 
to $H^*_{Out(F_{2(p-1)}}(\mathcal{A};\FF_p)$ are simply given by the 
cohomology of the corresponding normalizers. The more interesting part 
is given by the component $\mathcal{A}_2$ which is described in the 
following result. Together with Proposition \ref{rose-normalizer} 
and Proposition \ref{theta-normalizers} this finishes the proof of 
Theorem \ref{the:4}.

\begin{proposition} 
a) There is a canonical isomorphism 
$$
H^*_{Out(F_{2(p-1)})}(\mathcal{A}_2;\FF_p)\cong 
H^*(N_{0,p-1}*_{N_{0,p-1}\cap N_2}N_2;\FF_p)\ .
$$
b) The restriction map
to the orbit of $E_2$ induces an epimorphism of rings
$$
H^*_{Out(F_{2(p-1)})}(\mathcal{A}_2;\FF_p)\to 
H^*(N_2;\FF_3)\cong 
H^*(N_{\Sigma}(\Z/p)\wr \Z/2;\FF_p)
$$
whose kernel is isomorphic to the ideal
$H^*(N_{\Sigma_p}(\Z/p);\FF_p)\otimes K_{p-1}$ where
$K_{p-1}$ is the kernel of the restriction map
$H^*(Aut(F_{p-1});\FF_p)\to H^*(N_{\Sigma_p}(\Z/p);\FF_p)$.
\end{proposition}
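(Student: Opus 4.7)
The plan is to exploit the geometry of the orbit space $Out(F_{2(p-1)})\backslash\mathcal{A}_2$. By the discussion in Section 4, this quotient is a path with three vertices (the orbits of $E_{0,p-1}$, $E_2$ and $\Delta(E_2)$) and two edges, both incident at the orbit of $E_2$. Iterating Lemma \ref{amalgam} (equivalently, appealing to Bass--Serre theory for the associated tree of groups) identifies
$$
H^*_{Out(F_{2(p-1)})}(\mathcal{A}_2;\FF_p)\cong H^*(G_{\mathrm{tot}};\FF_p),
$$
where $G_{\mathrm{tot}} = G'*_{N_\Delta\cap N_2} N_\Delta$ and $G' = N_{0,p-1}*_{N_{0,p-1}\cap N_2} N_2$. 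Both parts of the proposition then reduce to cohomology comparisons among these amalgamated products via Mayer--Vietoris.

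For part (a), the key step is to show that the inclusion $N_\Delta\cap N_2\hookrightarrow N_\Delta$ induces an isomorphism in mod-$p$ cohomology. Once established, the Mayer--Vietoris sequence for $G_{\mathrm{tot}}=G'*_{N_\Delta\cap N_2} N_\Delta$ collapses: the surjectivity of $H^*(N_\Delta;\FF_p)\to H^*(N_\Delta\cap N_2;\FF_p)$ forces the connecting map to vanish, and the kernel of the difference-of-restrictions map projects isomorphically onto $H^*(G';\FF_p)$. To verify the crucial isomorphism I would use Proposition \ref{Delta(E_2)} to write $N_\Delta = A*_B C$ with $A = N_\Delta\cap N_2$, $B = N_{\Sigma_p}(\Z/p)\times\Z/2$ and $C = N_{\Sigma_p}(\Z/p)\times\Sigma_3$. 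Since $p>3$ the groups $\Z/2$ and $\Sigma_3$ have order prime to $p$, so by the K\"unneth formula $H^*(B;\FF_p)\cong H^*(C;\FF_p)\cong H^*(N_{\Sigma_p}(\Z/p);\FF_p)$, and the inclusion $B\hookrightarrow C$ induces an isomorphism. Mayer--Vietoris for $A*_B C$ then immediately yields $H^*(N_\Delta;\FF_p)\cong H^*(A;\FF_p)$ as required.

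For part (b) I would analyze the Mayer--Vietoris long exact sequence for $G' = N_{0,p-1}*_{H_1} N_2$ with $H_1 = N_{0,p-1}\cap N_2$. Setting $A := H^*(N_{\Sigma_p}(\Z/p);\FF_p)$, Propositions \ref{pro:3} and \ref{intersections} together with the K\"unneth formula yield $H^*(N_{0,p-1};\FF_p) = A\otimes H^*(Aut(F_{p-1});\FF_p)$, $H^*(H_1;\FF_p) = A\otimes A$, $H^*(N_2;\FF_p) = (A\otimes A)^{\Z/2}$, and the restriction $r_1^*: H^*(N_{0,p-1};\FF_p)\to H^*(H_1;\FF_p)$ takes the form $\mathrm{id}_A\otimes\rho$, where $\rho: H^*(Aut(F_{p-1});\FF_p)\to A$ is restriction. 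The crucial input is that $\rho$ is surjective above the virtual cohomological dimension; granting this, $r_1^*$ and hence $r_1^*-r_2^*$ is surjective, so Mayer--Vietoris yields a short exact sequence. Surjectivity of $\iota_2^*:H^*(G';\FF_p)\to H^*(N_2;\FF_p)$ then follows by lifting $r_2^*(y)$ through $r_1^*$ for any $y\in H^*(N_2;\FF_p)$, and injectivity in the short exact sequence identifies $\ker(\iota_2^*)\cong\ker(r_1^*) = A\otimes K_{p-1}$ as an ideal (kernel of a ring map).

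The main obstacle is the cohomological fact that $\rho: H^*(Aut(F_{p-1});\FF_p)\to H^*(N_{\Sigma_p}(\Z/p);\FF_p)$ is surjective above the v.c.d. This follows from the Farrell cohomology calculations of \cite{GMV}: the group $Aut(F_{p-1})$ has $p$-rank one, so above the v.c.d.\ its mod-$p$ cohomology is controlled by the normalizer of a $p$-Sylow, and one checks that the relevant Weyl group action agrees with the $\Z/(p-1)$ action of $N_{\Sigma_p}(\Z/p)$ on $H^*(\Z/p;\FF_p)$. Once this input is granted, the rest is routine Mayer--Vietoris bookkeeping.
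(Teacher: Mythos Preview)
Your proposal is correct and follows essentially the same route as the paper. Both arguments hinge on the observation that, since $p>3$, the inclusion $N_\Delta\cap N_2\hookrightarrow N_\Delta$ induces an isomorphism in mod-$p$ cohomology (the paper phrases this as ``the edge between $\Delta(E_2)$ and $E_2$ gives the same contribution as the vertex $\Delta(E_2)$''), and both invoke \cite{GMV} together with the vanishing of $H^*(N_{\Sigma_p}(\Z/p);\FF_p)$ below degree $2p-3$ for the surjectivity of $\rho$; you simply package the bookkeeping via iterated amalgams and Mayer--Vietoris where the paper works directly with the isotropy spectral sequence and its equalizer description.
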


\begin{proof} We consider the isotropy spectral sequence 
for the map 
$$
EOut(F_{2(p-1)})\times_{Out(F_{2(p-1)})}\mathcal{A}_2\to 
\mathcal{A}_2/Out(F_{2(p-1)})
$$ 
By Proposition \ref{Delta(E_2)} and Proposition \ref{intersections} 
the edge between $\Delta(E_2)$ and $E_2$ gives the same contribution 
as the vertex $\Delta(E_2)$ (because $p>3$) and the corresponding face map 
induces an isomorphism so that this edge may be ignored. Then (a) follows 
from Lemma \ref{amalgam}.  
\smallbreak 

Next we claim that the restriction map from $H^*(N_{0,p-1};\FF_p)$
to $H^*(N_{0,p-1}\cap N_2;\FF_p)$ is onto; 
in fact by Proposition 
\ref{theta-normalizers} and Proposition \ref{intersections} it is enough 
to show that the restriction map 
$$
H^*(Aut(F_{p-1});\FF_p)\to H^*(N_{\Sigma}(\Z/p);\FF_p)
$$ 
is onto.  This in turn follows from \cite{GMV} where it is shown 
that the restriction map is an isomorphism in Farrell cohomology 
together with the observation that the virtual cohomological dimension is 
$2p-5$ and the cohomology of $N_{\Sigma_p}(\Z/p)$ is trivial below 
dimension $2p-3$. 

From the spectral sequence we see now that 
$H^*_{Out(F_{2(p-1)}}(\mathcal{A}_2;\FF_p)$ is the equalizer 
of the two maps  
$$
H^*(N_{0,p-1};\FF_3)\times H^*(N_2;\FF_3)
\to H^*(N_{0,p-1}\cap N_2;\FF_3)
$$ 
and the result follows by using once more Proposition \ref{theta-normalizers} 
and Proposition \ref{intersections}.   $\square$
\end{proof}

\begin{remark} For $p=3$ the situation is somewhat more 
complicated due to the 
symmetry of the graph $K_{3,3}$ and the resulting additional 
elementary abelian subgroups of $Out(F_4)$. However, with a bit of 
effort one can carry out the same analysis for $p=3$ and thus 
get a second proof of Theorem \ref{the:4}. We leave it to the 
interested reader to work out the details.  
\end{remark}

\end{document}